\title{{\bf  Group Partitions via Commutativity and Related Topics}}
\author{ {\bf A. Mahmoudifar}, {\bf A. R. Moghaddamfar} and  {\bf F. Salehzadeh}}
\newenvironment{proof}{\noindent {\em {Proof}}.}{$\square$
\medskip}
\newtheorem{theorem}{Theorem}[section]
\newtheorem{definition}[theorem]{Definition}
\newtheorem{corollary}[theorem]{Corollary}
\newtheorem{lm}[theorem]{Lemma}
\begin{document}
\newcommand{\f}{\frac}
\newcommand{\sta}{\stackrel}
\maketitle
\begin{abstract}
\noindent  
Let $G$ be a nonabelian group, $A\subseteq G$ an abelian subgroup and $n\geqslant 2$ an integer.
We say that $G$ has an $n$-abelian partition with respect to $A$, 
if there exists a partition of $G$ into  $A$ and $n$ disjoint commuting subsets $A_1,  A_2,  \ldots, A_n$ of $G$,  such that $|A_i|>1$ for each $i=1, 2, \ldots, n$. 
We first classify all nonabelian groups, up to isomorphism,  which have an $n$-abelian partition for $n=2, 3$. 
Then, we  provide
some formulas concerning the number of spanning trees of commuting graphs
associated with certain finite groups.  Finally, we point out some ways 
to finding  the number of spanning trees of
the commuting graphs of some specific groups.
  \end{abstract}

{\em Keywords}: $n$-abelian partition, AC-group, projective special linear group, commuting graph, spanning tree, Laplacian matrix.

\renewcommand{\baselinestretch}{1}
\def\thefootnote{ \ }
\footnotetext{{\em $2010$ Mathematics Subject Classification}: 05C25,  20D06.}
\section{Introduction and Motivation}
Let $\Gamma$ be a simple graph and  $m, n$ two non-negative integers. We say that $\Gamma$ is {\em$(m,n)$-partitionable}
 if its vertex set can be partitioned into $m$ independent sets $I_1, \ldots, I_m$ and $n$ cliques $C_1, \ldots, C_n$; that is 
 $$V_\Gamma=I_1\uplus I_2\uplus\cdots\uplus I_m \uplus C_1\uplus C_2\uplus\cdots\uplus C_n.$$
Such a partition of $V_\Gamma$ is called a {\em$(m, n)$-partition} of $\Gamma$ (see \cite{Brandstadt}). 
We shall note some special cases: $(1,1)$-partitionable graphs are called split graphs (see \cite{Foldes-Hammer}), $(1,0)$-partitionable graphs are called edgeless graphs,
 $(0,1)$-partitionable graphs are called complete graphs.
In particular,  in the case when $m=0$ or $n=0$,  we essentially split $\Gamma$ into  $n$ cliques, $V_\Gamma=C_1\uplus C_2\uplus\cdots\uplus C_n$, or $m$ independent sets,
 $V_\Gamma=I_1\uplus I_2\uplus\cdots\uplus I_m$, respectively. 
 
 We now focus our attention on a graph associated with a finite group-  the so-called commuting graph. Let $G$ be a finite group and $X$ a nonempty subset of $G$.
The {\em commuting graph} ${\cal C}(X)$,  has
$X$ as its vertex set with two distinct elements of
$X$ joined by an edge when they {\em commute} in $G$.
Commuting graphs have been investigated by many authors in various contexts, see for instance \cite{Britnell, Das, Mahmoudifar}. Clearly, ${\cal C}(G)$ is $(0,n)$-partitionable if and only if $G$ 
can be partitioned into $n$ commuting subsets. This suggests the following definition.
\begin{definition}\label{def1}{\rm  Let $G$ be a nonabelian group, $A\subseteq G$ an abelian subgroup and $n\geqslant 2$ an integer.
We say that $G$ has an {\em $n$-abelian partition with respect to $A$}, 
if there exists a partition of $G$ into  $A$ and $n$ disjoint commuting subsets $A_1,  A_2,  \ldots, A_n$ of $G$,
$G=A\uplus A_1\uplus A_2\uplus \cdots \uplus A_n
$, such that $|A_i|>1$ for each $i=1, 2, \ldots, n$. }
\end{definition}

Note that, the condition $n\geqslant 2$ in Definition \ref{def1} is needed, because if $n=1$, we have  $G=A\uplus A_1$, and so $G=\langle A_1\rangle$.
Since $A_1$ is a commuting set, this would imply $G$ is abelian, which is not the case.  On the other hand, 
the structure of groups $G$ which have an $n$-abelian partition for $n=2, 3$, is obtained (Theorems \ref{th-1} and \ref{th-2}).

\noindent {\em Remark.} Let $Z=Z(G)$.  If $|Z|\geqslant 2$, $|G:Z|=1+n\geqslant 4$ and $T=\{x_0, x_1, \ldots, x_n\}$ is 
a transversal for $Z$ in $G$, where $x_0\in Z$, then,  as the cosets of the centre are 
abelian subsets of $G$, we have the following $n$-abelian partition for $G$ with respect to $Z$:
$$G=Z\uplus Zx_1\uplus Zx_2\uplus \cdots \uplus Zx_n.$$ 
However, there are centerless groups $G$  for which there is no $n$-abelian partition with respect to an abelian subgroup,
for every $n$. For example, consider the symmetric group ${\Bbb S}_3$ on $3$ letters.

Clearly, when $1\in X$, ${\cal C}(X)$ is connected, so one can talk about the number of spanning trees  (or {\em tree-number})
of this graph, which is denoted by $\kappa(X)$.  Moreover, it is easy to see that $X$ is a  
 commuting subset of $G$  (i.e., all 
pairs of its elements commute) if and only if ${\cal C}(X)$ is a complete graph. Thus,  if $X$ is a commuting subset of $G$, then 
by Cayley's formula we obtain  $\kappa(X)=|X|^{|X|-2}$. 
In \cite{Mahmoudifar}, in particular, it was proved that, for every
 Frobenius group $G$ with core $F$ and complement $C$,  $\kappa(G)=\kappa(F)\kappa(C)^{|F|}$.
 Moreover, it was shown in \cite{Mahmoudifar} that the alternating group ${\Bbb A}_5$ can be characterized by $\kappa ({\Bbb A}_5)$  in the class of nonsolvable groups.
Here, we are interested in the problem of finding 
the $\kappa(G)$ for some specific groups $G$ (Theorems  \ref{char2}-\ref{pp-abelian}, Corollaries \ref{2-abelian-trees}, \ref{3-abelian-trees}, and Table 1). 

The outline of the paper is as follows. 
In Section 2, we provide a number of basic results related to the tree-numbers.
In Section 3, the structure of groups $G$ which have a $n$-abelian partition for $n\in \{2, 3\}$, is obtained.   
Finally, in Section 4,  some explicit formulas for the tree-numbers of commuting graphs associated with certain groups are obtained.  

All notation and terminology for groups and graphs are standard. In addition, 
the  spectrum $\omega(G)$ of a finite group $G$ is the set of its element orders. 
It is closed under divisibility relation and so determined uniquely through the set $\mu (G)$ 
of those elements in  $\omega(G)$  that are  maximal under the divisibility relation. 
Following S. M. Belcastro and G. J. Sherman \cite{Belcastro} we  denote 
by $\#{\rm Cent}(G)$ the number of distinct centralizers in $G$. We 
shall say that a group $G$
is $n$-centralizer if $\#{\rm Cent}(G)= n$. 
\section{Auxiliary Results}
Let $G$ be a nonabelian group and $A\subset G$ an abelian subgroup. 
If $G$ has an $n$-abelian partition  with respect to  $A$, 
$G=A\uplus A_1\uplus \cdots \uplus A_n$, then, we have 
$${\cal C}(A)=K_{|A|}, \ \ \  {\cal C}(A_i\cup \{1\})=K_{|A_i|+1}, \ \ i=1, 2, \ldots, n.$$
Therefore, by Corollary  2.7 in \cite{Mahmoudifar}, we get:
$$\kappa(G)\geqslant  |A|^{|A|-2} \prod_{i=1}^{n} (|A_i|+1)^{|A_i|-1}.$$
In particular, if the center $Z(G)$ of the group $G$ is nontrivial, and 
$$G=Z(G)\uplus Z(G)x_1\uplus Z(G)x_2\uplus \cdots \uplus Z(G)x_n,$$ 
is an $n$-abelian partition for $G$ (a coset decomposition of $G$), then 
$$\kappa(G)\geqslant  |Z(G)|^{|Z(G)|-2} (|Z(G)|+1)^{(|Z(G)|-1)n}.$$

A {\em noncommuting set} of a group $G$ (i.e., an independent set in commuting graph ${\cal C}(G)$) has the property that no two of its elements commute under the group operation.  We denote by ${\rm nc}(G)$ 
 the maximum cardinality of any noncommuting  set of $G$ (the independence number of  ${\cal C}(G)$).  Denote by ${\rm k}(G)$ the number of distinct conjugacy classes of $G$. If $G$ has an $n$-abelian partition, then  
 the pigeon-hole principle gives ${\rm nc}(G)\leqslant n+1$. Thus, by  Corollary 2.2 (a) in \cite{Bertram}, we obtain $$|G|\leqslant {\rm nc}(G)\cdot {\rm k}(G)\leqslant (n+1){\rm k}(G),$$
which immediately implies that
\begin{equation}\label{eq-4}
 n\geqslant \left\lfloor \frac{|G|}{{\rm k}(G)}\right\rfloor-1.
\end{equation}
 Therefore, we have found a lower bound for $n$, when ${\rm k}(G)$ is known.
 
{\em Some Examples.}  Let $G=L_2(q)$, where $q\geqslant 4$ is a power of $2$.  We know that $|G|=q(q^2-1)$ and ${\rm k}(L_2(q))=q+1$. Thus, if $G$ has an $n$-abelian partition, then by Eq. (\ref{eq-4}), we get  
 $n\geqslant  q^2-q-1$.  In particular, since ${\Bbb A}_5\cong L_2(4)$,  if ${\Bbb A}_5$ has an $n$-abelian partition, then
 $n\geqslant 11$.  In fact, ${\Bbb A}_5$ has  a $20$-abelian partition, as follows: 
 $${\Bbb A}_5=A\uplus A_1^\#\uplus A_2^\#\uplus \cdots \uplus A_{20}^\#,$$
 where   $A_i^\#=A_i\setminus \{1\}$, for every $i$, and 
 \begin{itemize}
 \item[]  $A, A_1, \ldots, A_5$ are Sylow $5$-subgroups of order $5$, 
 \item[]  $A_6, A_7, \ldots, A_{15}$ are Sylow $3$-subgroups of order $3$,  
 \item[]  $A_{16}, A_{17}, \ldots, A_{20}$ are Sylow $2$-subgroups of order $4$. 
 \end{itemize}
 Similarly,  if $G_1={\rm GL}(2,q)$ and $G_2={\rm GL}(3,q)$, $q$  a prime power, then we have   \begin{itemize}
 \item[]    $|G_1|=(q^2-q)(q^2-1)$ and  ${\rm k}(G_1)=q^2-1$,  while
 \item[]  $|G_2|=(q^3-1)(q^3-q)(q^3-q^2)$ and ${\rm k}(G_2)=q^3-q$.
 \end{itemize}
 Again, if $G_i$ has an $n_i$-abelian partition, for $i=1, 2$, by Eq. (\ref{eq-4}), we obtain 
 $n_1\geqslant  q(q-1)-1$ and $n_2\geqslant q^2(q^3-1)(q-1)-1$. 
 
In the sequel, we establish some notation which will be used repeatedly.
Given a graph $\Gamma$, we denote by $A_\Gamma$ and $D_\Gamma$ the adjacency matrix and the diagonal matrix of vertex degrees of $\Gamma$, respectively. The {\em Laplacian matrix} of $\Gamma$ is defined
as $L_\Gamma=D_\Gamma-A_\Gamma$. Clearly, $L_\Gamma$ is a real symmetric matrix and its eigenvalues are nonnegative real numbers.
The Laplacian spectrum of $\Gamma$ is
$${\rm Spec}(L_\Gamma)=\left(\mu_1(\Gamma), \mu_2(\Gamma), \ldots, \mu_n(\Gamma)\right),$$
where $\mu_1(\Gamma)\geqslant \mu_2(\Gamma)\geqslant \cdots\geqslant \mu_n(\Gamma)$,
are the eigenvalues of $L_\Gamma$ arranged in weakly decreasing order, and $n=|V\Gamma|$.
Note that,  $\mu_n(\Gamma)$ is always $0$, because each row sum of $L_\Gamma$ is 0.
Instead of $A_\Gamma$, $D_\Gamma$, $L_\Gamma$, and $\mu_i(\Gamma)$  we simply write $A$, $D$, $L$,  and $\mu_i$  if it does not lead to confusion.

For a graph with $n$ vertices and Laplacian spectrum $\mu_1\geqslant \mu_2\geqslant \cdots\geqslant \mu_n$
it has been proved \cite[Corollary 6.5]{Biggs} that:
\begin{equation}\label{eq1}
\kappa(\Gamma)=\frac{\mu_1 \mu_2 \cdots \mu_{n-1}}{n}.
\end{equation}

The vertex-disjoint union of the graphs $\Gamma_1$ and $\Gamma_2$ is denoted by $\Gamma_1\oplus\Gamma_2$.  We shall write $m\Gamma$ instead of $\Gamma \oplus \Gamma \oplus \cdots \oplus \Gamma$ ($m$ times).
Define the {\em join} of $\Gamma_1$ and $\Gamma_2$ to be $\Gamma_1 \vee \Gamma_2=(\Gamma_1^c\oplus\Gamma_2^c)^c$,
where $\Gamma^c$ signifies the complement of $\Gamma$. Actually, this is the vertex-disjoint union of the two graphs,
and adding edges joining every vertex of  $\Gamma_1$  to every
vertex of $\Gamma_2$.
Now, one may easily prove (see also \cite{Merris}):
\begin{lm}\label{elementary0}
Let $\Gamma_1$ and $\Gamma_2$ be two graphs on disjoint sets  with $m$ and $n$ vertices, respectively. If
$${\rm Spec}(L_{\Gamma_1})=\left(\mu_1(\Gamma_1), \mu_2(\Gamma_1), \ldots, \mu_m(\Gamma_1)\right),$$
and
$${\rm Spec}(L_{\Gamma_2})=\left(\mu_1(\Gamma_2), \mu_2(\Gamma_2), \ldots, \mu_n(\Gamma_2)\right),$$ then, there hold:
\begin{itemize}
\item[{\rm (1)}] the eigenvalues of Laplacian matrix  $L_{\Gamma_1\oplus \Gamma_2}$ are:
$$\mu_1(\Gamma_1), \  \ldots, \ \mu_{m}(\Gamma_1),  \ \mu_1(\Gamma_2), \  \ldots, \ \mu_{n}(\Gamma_2).$$

\item[{\rm (2)}] the eigenvalues of Laplacian matrix  $L_{\Gamma_1\vee \Gamma_2}$ are:
$$m+n, \ \mu_1(\Gamma_1)+n, \  \ldots, \  \mu_{m-1}(\Gamma_1)+n, \ \mu_1(\Gamma_2)+m, \ \ldots, \ \mu_{n-1}(\Gamma_2)+m, \ 0.$$
\end{itemize}
\end{lm}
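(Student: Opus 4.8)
The plan is to treat the two assertions separately: part (1) is immediate from the block structure of the Laplacian, and part (2) reduces to it via the very complement operation used to define the join, $\Gamma_1\vee\Gamma_2=(\Gamma_1^c\oplus\Gamma_2^c)^c$. For part (1) I would order the $m+n$ vertices so that those of $\Gamma_1$ precede those of $\Gamma_2$. Since $\Gamma_1\oplus\Gamma_2$ has no edges between the two vertex sets, both $A$ and $D$ are block diagonal, hence
\[
L_{\Gamma_1\oplus\Gamma_2}=\left(\begin{array}{cc} L_{\Gamma_1} & 0 \\ 0 & L_{\Gamma_2}\end{array}\right),
\]
and the spectrum of a block-diagonal symmetric matrix is the union, with multiplicities, of the spectra of its diagonal blocks. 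This yields exactly the list in (1).

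For part (2) I would first record the standard complement identity: for any graph $\Gamma$ on $N$ vertices, $L_\Gamma+L_{\Gamma^c}=NI_N-J_N$, where $J_N$ is the all-ones matrix (this follows at once from $A_\Gamma+A_{\Gamma^c}=J_N-I_N$ and $D_\Gamma+D_{\Gamma^c}=(N-1)I_N$). The all-ones vector $\mathbf{1}_N$ lies in the kernel of both $L_\Gamma$ and $L_{\Gamma^c}$, while on $\mathbf{1}_N^{\perp}$ we have $J_Nv=0$, so $L_\Gamma+L_{\Gamma^c}=NI_N$ there. Consequently, if $v\perp\mathbf{1}_N$ satisfies $L_\Gamma v=\lambda v$, then $L_{\Gamma^c}v=(N-\lambda)v$; since $\mathbf{1}_m$ always realizes the eigenvalue $\mu_m(\Gamma_1)=0$, the remaining $m-1$ eigenvalues $\mu_1(\Gamma_1),\ldots,\mu_{m-1}(\Gamma_1)$ can be taken on $\mathbf{1}_m^{\perp}$, and likewise for $\Gamma_2$.

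Applying this, the spectrum of $L_{\Gamma_i^c}$ is $\{0\}$ together with the values $m-\mu_i(\Gamma_1)$ (resp.\ $n-\mu_j(\Gamma_2)$), and by part (1) the spectrum of $L_{\Gamma_1^c\oplus\Gamma_2^c}$ is their union. The crucial bookkeeping step is to identify, inside this union on $m+n$ vertices, the eigenvector playing the role of the global all-ones vector. The direct sum carries two ``trivial'' kernel directions $(\mathbf{1}_m,0)$ and $(0,\mathbf{1}_n)$, both with eigenvalue $0$; their sum is $\mathbf{1}_{m+n}$, which will contribute the eigenvalue $0$ to the join, whereas the orthogonal combination $(n\mathbf{1}_m,-m\mathbf{1}_n)\perp\mathbf{1}_{m+n}$ also carries eigenvalue $0$ and so contributes $(m+n)-0=m+n$. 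Each eigenvector $(v_i,0)$ with $v_i\perp\mathbf{1}_m$ lies in $\mathbf{1}_{m+n}^{\perp}$ and its eigenvalue $m-\mu_i(\Gamma_1)$ becomes $(m+n)-(m-\mu_i(\Gamma_1))=\mu_i(\Gamma_1)+n$, and symmetrically $n-\mu_j(\Gamma_2)$ becomes $\mu_j(\Gamma_2)+m$. A final application of the complement identity with $N=m+n$, passing from $\Gamma_1^c\oplus\Gamma_2^c$ to $\Gamma_1\vee\Gamma_2$, then produces precisely the asserted list.

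I expect no genuine obstacle, since the statement is a linear-algebra identity; the only care needed is the index accounting just described, namely tracking the two zero eigenvalues of the disjoint union and checking that the eigenvectors for the $m-1$, resp.\ $n-1$, nontrivial eigenvalues of each summand are automatically orthogonal to $\mathbf{1}_{m+n}$. As a cross-check, and as a self-contained route that avoids the complement identity entirely, one may instead verify directly that $\mathbf{1}_{m+n}$, the vector $(n\mathbf{1}_m,-m\mathbf{1}_n)$, and the lifts $(v_i,0)$, $(0,w_j)$ (for $v_i\perp\mathbf{1}_m$, $w_j\perp\mathbf{1}_n$ eigenvectors of $L_{\Gamma_1}$, $L_{\Gamma_2}$) are mutually orthogonal eigenvectors of the block Laplacian
\[
L_{\Gamma_1\vee\Gamma_2}=\left(\begin{array}{cc} L_{\Gamma_1}+nI_m & -J_{m\times n} \\ -J_{n\times m} & L_{\Gamma_2}+mI_n\end{array}\right),
\]
with eigenvalues $0$, $m+n$, $\mu_i(\Gamma_1)+n$, and $\mu_j(\Gamma_2)+m$ respectively; a dimension count shows these exhaust the spectrum.
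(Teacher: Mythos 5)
Your proof is correct. Note that the paper does not actually supply an argument for this lemma: it states it with the remark that ``one may easily prove'' it and a citation to Merris, so there is no paper proof to compare against line by line. Your two routes are both sound and are the standard ones. The complement-identity route ($L_\Gamma+L_{\Gamma^c}=NI_N-J_N$, applied once to each factor and once to the disjoint union of complements) correctly tracks the two kernel directions $(\mathbf{1}_m,0)$ and $(0,\mathbf{1}_n)$, splitting them into $\mathbf{1}_{m+n}$ (eigenvalue $0$) and $(n\mathbf{1}_m,-m\mathbf{1}_n)$ (eigenvalue $m+n$), and the index bookkeeping $1+1+(m-1)+(n-1)=m+n$ closes the dimension count. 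Your cross-check via the explicit block form
$$L_{\Gamma_1\vee\Gamma_2}=\left(\begin{array}{cc} L_{\Gamma_1}+nI_m & -J_{m\times n} \\ -J_{n\times m} & L_{\Gamma_2}+mI_n\end{array}\right)$$
is arguably the cleaner self-contained verification, since it avoids any worry about how eigenvalues of the complement are ordered and exhibits an explicit orthogonal eigenbasis. Either version would serve as a complete proof of the lemma as the paper uses it.
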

\begin{lm}\label{join-integer}  Let $\Gamma$ be any graph on $n$ vertices with Laplacian spectrum
$\mu_1 \geqslant \mu_2 \geqslant  \cdots \geqslant  \mu_n$.
If $m$ is an integer, then the following product
$$(\mu_1+m)(\mu_2+m)\cdots (\mu_{n-1}+m),$$
is also an integer which is  divisible by $m$.
\end{lm}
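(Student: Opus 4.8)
The plan is to work directly with the characteristic polynomial of the Laplacian matrix, exploiting the fact that $L=L_\Gamma$ has integer entries. Write $\chi(x)=\det(xI-L)=\prod_{i=1}^{n}(x-\mu_i)$. Since $L$ is an integer matrix, $\chi(x)$ is a monic polynomial in $\mathbb{Z}[x]$. The key structural fact is that $\mu_n=0$ is always a Laplacian eigenvalue, because each row sum of $L$ is $0$; hence $\chi(0)=0$ and $x$ divides $\chi(x)$.

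First I would factor $\chi(x)=x\,f(x)$, where $f(x)=\prod_{i=1}^{n-1}(x-\mu_i)$. Because $\chi(x)$ is monic with integer coefficients and vanishing constant term, dividing by the monic polynomial $x$ merely re-indexes the (integer) coefficients, so $f(x)$ again lies in $\mathbb{Z}[x]$. Evaluating $f$ at $-m$ gives
$$f(-m)=\prod_{i=1}^{n-1}(-m-\mu_i)=(-1)^{n-1}\prod_{i=1}^{n-1}(\mu_i+m),$$
so that $\prod_{i=1}^{n-1}(\mu_i+m)=(-1)^{n-1}f(-m)$. Since $f\in\mathbb{Z}[x]$ and $m\in\mathbb{Z}$, the value $f(-m)$ is an integer, and therefore the product $(\mu_1+m)(\mu_2+m)\cdots(\mu_{n-1}+m)$ is an integer, settling the first assertion.

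For the divisibility statement I would reinstate the omitted factor $\mu_n+m=m$. Multiplying through,
$$m\prod_{i=1}^{n-1}(\mu_i+m)=\prod_{i=1}^{n}(\mu_i+m)=(-1)^{n}\prod_{i=1}^{n}(-m-\mu_i)=(-1)^{n}\chi(-m),$$
which is again an integer, being $\chi$ evaluated at the integer $-m$. Thus the full product $\prod_{i=1}^{n}(\mu_i+m)$ is an integer equal to $m$ times the integer produced above, which exhibits the required divisibility by $m$.

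The only genuinely delicate point is the claim that the quotient $f(x)=\chi(x)/x$ has integer coefficients; everything else is formal. This, however, is immediate once one observes that $\chi(x)$ has zero constant term, so dividing by $x$ does nothing more than shift its integer coefficients, and no appeal to Gauss's lemma is needed. An alternative route, should one wish to avoid the polynomial factorization, is to express the product through the elementary symmetric functions of the $\mu_i$ (equivalently, the coefficients of $\chi$) and invoke the Matrix--Tree theorem to identify the relevant coefficient with $n\,\kappa(\Gamma)$; but the factorization argument above is cleaner and sidesteps that bookkeeping entirely.
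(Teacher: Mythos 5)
Your argument follows essentially the same route as the paper's: both work with the characteristic polynomial $\chi(x)=\det(xI-L)$ of the Laplacian, use its integer coefficients together with the vanishing constant term (equivalently, $\mu_n=0$) to peel off a factor of $x$, and then evaluate at $-m$. Your integrality step --- that $f(x)=\chi(x)/x$ lies in $\mathbb{Z}[x]$ because $\chi$ is a monic integer polynomial with zero constant term, hence $f(-m)=(-1)^{n-1}\prod_{i=1}^{n-1}(\mu_i+m)$ is an integer --- is correct and is exactly the content of the paper's identity $\sigma(\Gamma;-m)=(-1)^n m\,(\mu_1+m)\cdots(\mu_{n-1}+m)$ combined with the observation that $m$ divides $\sigma(\Gamma;-m)$.

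The gap is in the divisibility clause. The lemma asserts that the $(n-1)$-fold product $P=\prod_{i=1}^{n-1}(\mu_i+m)$ is itself divisible by $m$; what you prove is that $mP=\prod_{i=1}^{n}(\mu_i+m)=(-1)^n\chi(-m)$ is divisible by $m$, which is immediate once $P$ is known to be an integer and says nothing further about $P$. Your closing phrase ``which exhibits the required divisibility'' conflates these two statements. Divisibility of $P$ by $m$ would require $m^2\mid\chi(-m)$, i.e.\ $m\mid c_{n-1}$ where $c_{n-1}=\pm\, n\,\kappa(\Gamma)$, and this fails in general: for $\Gamma=K_2$ the Laplacian spectrum is $(2,0)$, so with $m=3$ the product is $2+3=5$, which is not divisible by $3$. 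So the clause as literally stated is false. In fairness, the paper's own proof has exactly the same defect (it too only shows that $\pm mP=\sigma(\Gamma;-m)$ is divisible by $m$ and then declares the result to follow), and every later application of the lemma uses only the integrality of $P$. You should either attach the divisibility claim to the full product $\prod_{i=1}^{n}(\mu_i+m)$, where your argument does prove it, or flag explicitly that the $(n-1)$-fold version cannot be proved because it is not true.
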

\begin{proof}  Consider the characteristic polynomial of the Laplacian matrix $L=L_\Gamma$:
$$\sigma(\Gamma; \mu)=\det (\mu I-L)=\mu^n+c_1\mu^{n-1}+\cdots+c_{n-1}\mu+c_n.$$
First, we observe that the coefficients $c_i$ are integers \cite[Theorem 7.5]{Biggs}, and in particular, $c_n=0$.
This forces  $\sigma(\Gamma; -m)$ is an integer, which is  divisible by $m$.
Moreover, we have
$$\sigma(\Gamma; \mu)=(\mu-\mu_1)(\mu-\mu_2)\cdots (\mu-\mu_n),$$
and since $\mu_n=0$, we obtain
$$\sigma(\Gamma; -m)=(-1)^n m(\mu_1+m)(\mu_2+m)\cdots (\mu_{n-1}+m).$$
 The result now follows.
\end{proof}

A {\em universal} vertex is a vertex of a graph that is adjacent to all other vertices of the graph.
\begin{lm}\label{full}
Let  a graph $\Gamma$ with $n$ vertices contain $m<n$ universal vertices. Then $\kappa(\Gamma)$ is divisible
by $n^{m-1}$.
\end{lm}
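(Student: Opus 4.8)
The plan is to realize the graph as a join and then read off its Laplacian spectrum. Write $U$ for the set of the $m$ universal vertices and let $H$ be the subgraph induced on the remaining $n-m$ vertices. Since every vertex of $U$ is adjacent to all other vertices (including one another), $U$ spans a complete graph $K_m$ and every $U$–$H$ edge is present; hence $\Gamma = K_m \vee H$. This is exactly the situation handled by Lemma \ref{elementary0}(2), so the whole argument reduces to spectral bookkeeping.

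Next I would record the Laplacian spectrum of $K_m$, namely the value $m$ with multiplicity $m-1$ together with a single $0$, and feed this into Lemma \ref{elementary0}(2) applied with first graph $K_m$ (on $m$ vertices) and second graph $H$ (on $n-m$ vertices); note that the symbol ``$n$'' of that lemma plays the role of $n-m$ here, so the shift applied to the eigenvalues of $H$ is precisely $m$. The top eigenvalue $m+(n-m)=n$ together with the $m-1$ shifted eigenvalues $m+(n-m)=n$ of $K_m$ contributes the value $n$ with total multiplicity $m$; the remaining nonzero eigenvalues are $\mu_j(H)+m$ for $j=1,\ldots,n-m-1$, and there is one final eigenvalue $0$.

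Now I would invoke the tree-number formula (\ref{eq1}): deleting the unique zero eigenvalue and dividing by $n$ gives
$$\kappa(\Gamma) = \frac{1}{n}\, n^{m}\prod_{j=1}^{n-m-1}\bigl(\mu_j(H)+m\bigr) = n^{m-1}\prod_{j=1}^{n-m-1}\bigl(\mu_j(H)+m\bigr).$$
It then remains only to check that the residual product is an integer, which is precisely the content of Lemma \ref{join-integer} applied to the graph $H$ (on $n-m$ vertices) with the integer shift $m$. Consequently $\kappa(\Gamma)$ is $n^{m-1}$ times an integer, and the claimed divisibility follows at once.

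The step requiring the most care is the multiplicity count: one must verify that the factor $n$ occurs \emph{exactly} $m$ times among the nonzero eigenvalues — one occurrence from the ``$m+n$'' term of the join and $m-1$ from the shifted eigenvalues of $K_m$ — since an off-by-one error here would alter the power of $n$. A convenient sanity check is the extreme case $m=n-1$, where $\Gamma=K_n$, the residual product is empty, and the formula yields $\kappa(\Gamma)=n^{n-2}$, in agreement with Cayley's formula. Beyond this careful accounting I anticipate no genuine obstacle, since the join structure renders the entire Laplacian spectrum explicit.
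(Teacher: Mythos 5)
Your proposal is correct and follows exactly the paper's argument: decompose $\Gamma = K_m \vee H$ with $H$ the subgraph induced on the non-universal vertices, read off the Laplacian spectrum via Lemma \ref{elementary0}(2) to find the eigenvalue $n$ with multiplicity $m$, and conclude with Eq.~(\ref{eq1}) and Lemma \ref{join-integer}. Your explicit multiplicity count and the $m=n-1$ sanity check are just a more detailed write-up of the same proof.
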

\begin{proof}  Let $W$ be the set of universal vertices. Clearly, $\Gamma=K_m\vee (\Gamma -W)$.
Since the Laplacian for the complete graph $K_m$ has eigenvalue $0$ with multiplicity $1$ and eigenvalue $m$ with multiplicity $m-1$,  it follows by Lemma \ref{elementary0}  that $L_\Gamma$ has eigenvalue
$n$  with multiplicity at least $m$. The result is now immediate from Lemma \ref{join-integer} and Eq. (\ref{eq1}).
\end{proof}

It is easy to see that for a group $G$, its center consists of the universal vertices of ${\cal C}(G)$.
Therefore, if $Z(G)$ is of order $m$, then we have
\begin{equation}\label{eq2}
{\cal C}(G)=K_m\vee \Delta(G),
\end{equation}
where $\Delta(G)={\cal C}(G\setminus Z(G))$.
The following corollary is now immediate from Lemma \ref{full}.
\begin{corollary} Let $G$ be a nonabelian group of order $n$ with the center of order $m$. Then $\kappa(G)$ is divisible by $n^{m-1}$.
\end{corollary}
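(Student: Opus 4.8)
The plan is to obtain this as a direct instance of Lemma~\ref{full} applied to the commuting graph $\Gamma={\cal C}(G)$. First I would note that $\Gamma$ has exactly $|G|=n$ vertices, since its vertex set is all of $G$. The essential input is the observation recorded immediately before the statement: a vertex $g$ of ${\cal C}(G)$ is universal precisely when $g$ commutes with every element of $G$, that is, when $g\in Z(G)$. Hence the set of universal vertices of $\Gamma$ is exactly $Z(G)$, which has cardinality $m$.

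Second, I would verify the hypothesis $m<n$ of Lemma~\ref{full}. This is the one place where the nonabelian assumption is used: since $G$ is nonabelian, $Z(G)$ is a proper subgroup, so $m=|Z(G)|<|G|=n$. With $\Gamma$ having $n$ vertices and exactly $m<n$ universal vertices, Lemma~\ref{full} immediately gives that $\kappa(\Gamma)=\kappa(G)$ is divisible by $n^{m-1}$, which is the desired conclusion.

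There is essentially no obstacle at this point: all the substantive work has already been done in establishing Lemma~\ref{full} (through the join decomposition ${\cal C}(G)=K_m\vee\Delta(G)$ of Eq.~(\ref{eq2}), together with Lemma~\ref{join-integer} and Eq.~(\ref{eq1})) and in the elementary identification of $Z(G)$ with the universal vertices. The only detail that must not be overlooked is the strict inequality $m<n$, since Lemma~\ref{full} is stated only under the assumption $m<n$; the nonabelianness of $G$ is exactly what guarantees it.
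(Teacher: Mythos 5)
Your proposal is correct and follows exactly the route the paper takes: identify the universal vertices of ${\cal C}(G)$ with $Z(G)$ and invoke Lemma \ref{full}. The explicit check that $m<n$ (from nonabelianness) is a sensible detail the paper leaves implicit.
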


We add further information about the commuting graph ${\cal C}(G)$. Let $G$ be a {\em nonabelian} group with $|G|=n$ and $|Z(G)|=m$.
Then $\Delta=\Delta(G)$ is a graph on $\nu=n-m>1$ vertices. For the characteristic polynomial of the Laplacian matrix $L_{\Delta}$, we shall write
$$\sigma(\Delta; \mu)=\det(\mu I-L_{\Delta})=\mu^{\nu}+c_1\mu^{\nu-1}+\cdots+c_{\nu-1}\mu. \ \ \ \ (\mbox{note that} \ c_\nu=0.)$$
If ${\rm Spec}(L_{\Delta})=\left(\mu_1, \mu_2, \ldots, \mu_{\nu-1}, 0\right)$, then we may write
$$\sigma(\Delta; \mu)=\mu (\mu - \mu_1)(\mu -\mu_2)\cdots (\mu -\mu_{\nu-1}).$$
Moreover, by Lemma \ref{elementary0}, the eigenvalues of Laplacian matrix  $L_{{\cal C}(G)}$ are:
$$n, \underbrace{n, \ n, \ \ldots, \ n}_{m-1}, \  \underbrace{\mu_1+m, \ \mu_2+m, \ \ldots, \ \mu_{\nu-1}+m}_{\nu-1}, \ 0. $$
It follows immediately using Eq. (\ref{eq1}) that
$$\kappa(G)=n^{m-1}(\mu_1+m)(\mu_2+m)\cdots (\mu_{\nu-1}+m),$$
or equivalently
$$\kappa(G)=n^{m-1}(-1)^{\nu}\sigma(\Delta; -m)/m.$$
As $\sigma(\Delta; -m)/m$ is an integer, so $n^{m-1}$ divides $\kappa(G)$. 

In particular, if $G$ is a centerless 
group (i.e., $Z(G)=1$), then we have 
$$\kappa(G)=(-1)^{n-1}\sigma(\Delta; -1)=(\mu_1+1)(\mu_2+1)\cdots (\mu_{n-2}+1).$$

As pointed out in the Introduction, if a group $G$ has the {\em nontrivial}  center  $Z=Z(G)$  and  $[G:Z]=1+n\geqslant 4$, then the coset decomposition $$G =\biguplus_{i=0}^{n} Zx_i,$$ is 
an $n$-abelian partition of $G$.  Let $|G|=n$ and $|Z|=m>1$.  Put $t=n/m$, and   
 \begin{equation}\label{eq3}
{\cal C}_0(G)=K_m\vee \left(t-1\right)K_m.
\end{equation}
Clearly, ${\cal C}_0(G)$ is a subgraph of ${\cal C}(G)$, and so $\kappa(G)\geqslant \kappa({\cal C}_0(G))$.  
\begin{corollary}\label{cor-1}  With the above notation,  we have
$$\kappa(G)\geqslant n^{m-1} m^{n-m-1} 2^{(t-1)(m-1)}.$$
\end{corollary}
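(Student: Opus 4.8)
The plan is to compute $\kappa({\cal C}_0(G))$ exactly, since the inequality $\kappa(G)\geqslant \kappa({\cal C}_0(G))$ has already been recorded (as ${\cal C}_0(G)$ is a spanning subgraph of ${\cal C}(G)$), and then to verify that this exact value coincides with the claimed lower bound. In other words, the whole argument reduces to a careful eigenvalue bookkeeping via Lemma \ref{elementary0}, followed by a single application of Eq. (\ref{eq1}); in fact the corollary holds with equality for the auxiliary graph ${\cal C}_0(G)$.

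First I would determine the Laplacian spectrum of the disjoint union $(t-1)K_m$. Since each $K_m$ contributes the eigenvalue $m$ with multiplicity $m-1$ and the eigenvalue $0$ once, part (1) of Lemma \ref{elementary0} gives that $(t-1)K_m$, a graph on $(t-1)m$ vertices, has Laplacian spectrum consisting of $m$ with multiplicity $(t-1)(m-1)$ and $0$ with multiplicity $t-1$. Next I would apply part (2) of Lemma \ref{elementary0} to the join ${\cal C}_0(G)=K_m\vee (t-1)K_m$, a graph on $tm=n$ vertices. Writing $N=(t-1)m$, the spectrum of the join is obtained by raising the top $m-1$ eigenvalues of $K_m$ by $N$ (yielding $m-1$ copies of $m+N=n$), raising the top $N-1$ eigenvalues of $(t-1)K_m$ by $m$ (yielding $(t-1)(m-1)$ copies of $2m$ together with $t-2$ copies of $m$, the latter coming from the surviving zero eigenvalues), together with a single eigenvalue $m+N=n$ and a single $0$. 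Collecting these, $L_{{\cal C}_0(G)}$ has spectrum: $n$ with multiplicity $m$, then $2m$ with multiplicity $(t-1)(m-1)$, then $m$ with multiplicity $t-2$, and finally $0$ once; a count confirms that these total exactly $n$ eigenvalues.

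Finally, Eq. (\ref{eq1}) gives $\kappa({\cal C}_0(G))=\frac{1}{n}\,n^{m}(2m)^{(t-1)(m-1)}m^{t-2}$, and I would simplify using $n=tm$ together with the identity $(t-1)(m-1)+(t-2)=n-m-1$ to obtain precisely $n^{m-1}m^{n-m-1}2^{(t-1)(m-1)}$. The only real care needed --- the step I expect to be the mild obstacle --- is to track correctly which zero eigenvalue is discarded when forming the join: the formula in Lemma \ref{elementary0}(2) suppresses one zero from each factor and reintroduces a single global zero, so that the surviving zeros of $(t-1)K_m$ must produce exactly $t-2$ copies of $m$ rather than $t-1$. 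Getting this multiplicity right is what makes the exponent of $m$ come out to $n-m-1$; everything else is routine arithmetic.
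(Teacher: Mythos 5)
Your proposal is correct and follows exactly the paper's own route: compute the Laplacian spectrum of ${\cal C}_0(G)=K_m\vee(t-1)K_m$ via Lemma \ref{elementary0}, obtaining $n$ with multiplicity $m$, $2m$ with multiplicity $(t-1)(m-1)$, $m$ with multiplicity $t-2$, and a single $0$, then apply Eq.\ (\ref{eq1}). Your extra care in tracking the suppressed zero eigenvalues (yielding $t-2$ rather than $t-1$ copies of $m$) is precisely the bookkeeping the paper leaves implicit, and the arithmetic $(t-1)(m-1)+(t-2)=n-m-1$ checks out.
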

\begin{proof} 
By Lemma \ref{elementary0}, the eigenvalues of Laplacian matrix  $L_{{\cal C}_0(G)}$ are:
$$n, \underbrace{n, \ n, \ \ldots, \ n}_{m-1}, \  \underbrace{2m, \ 2m, \ \ldots, \ 2m}_{(t-1)(m-1)},  \  \underbrace{m, \ m, \ \ldots, \ m}_{t-2}, \ 0. $$
Using Eq. (\ref{eq1}) and simple computations, we obtain 
$$\kappa(G)\geqslant \kappa({\cal C}_0(G))=n^{m-1} m^{n-m-1} 2^{(t-1)(m-1)},$$
and the result follows. \end{proof}

 \begin{lm}\label{lm-41} {\rm \cite[Lemma 4.1]{Pyber}} 
Let $\{g_1, \ldots, g_m\}$ be a largest noncommuting subset of $G$.  Then $\cap_{i=1}^{m} C_G(g_i)$ is an abelian subgroup of $G$.
\end{lm}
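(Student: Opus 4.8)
The plan is to argue by contradiction, exploiting the \emph{maximality} (in cardinality) of the noncommuting set $\{g_1,\ldots,g_m\}$. Write $H=\bigcap_{i=1}^m C_G(g_i)$; since each centralizer $C_G(g_i)$ is a subgroup, $H$ is a subgroup of $G$, and so it suffices to show that any two of its elements commute. Suppose not: then there exist $x,y\in H$ with $xy\neq yx$. The idea is to convert this failure of commutativity \emph{inside} $H$ into a noncommuting set of size $m+1$, which would contradict the assumption that $m$ is the maximum cardinality of a noncommuting subset of $G$.

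The crucial feature I would use is that every element of $H$ centralizes each $g_i$; in particular $x$ and $y$ commute with $g_1$. The key move is to replace the single element $g_1$ by the two elements $g_1x$ and $g_1y$, and to consider the set $T=\{g_1x,\ g_1y,\ g_2,\ g_3,\ \ldots,\ g_m\}$. I would then verify that $T$ is a noncommuting set by two short computations. Since $x$ (resp.\ $y$) commutes with every $g_j$, one gets $(g_1x)g_j=(g_1g_j)x$ while $g_j(g_1x)=(g_jg_1)x$, so $g_1x$ fails to commute with $g_j$ precisely because $g_1$ does, for each $j\geqslant 2$ (and likewise for $g_1y$). For the genuinely new pair, using $xg_1=g_1x$ and $yg_1=g_1y$ one computes $(g_1x)(g_1y)=g_1^2xy$ and $(g_1y)(g_1x)=g_1^2yx$, so these two elements fail to commute exactly because $xy\neq yx$. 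Together with the fact that $g_2,\ldots,g_m$ were already pairwise noncommuting, this shows every pair in $T$ is noncommuting.

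The main obstacle, and the point needing care, is to confirm that $T$ really consists of $m+1$ \emph{distinct} elements, since only then does it contradict maximality. Here the noncommuting relations handle the bookkeeping for free: $g_1x=g_1y$ would force $x=y$, impossible as $x$ and $y$ do not commute; and $g_1x=g_j$ (or $g_1y=g_j$) for some $j\geqslant 2$ would make $g_1x$ commute with $g_j$, contradicting what was just established. Hence $T$ is a noncommuting set of cardinality $m+1$, contradicting the choice of $\{g_1,\ldots,g_m\}$ as a largest noncommuting subset, and therefore $H$ is abelian. The degenerate case $m\leqslant 1$, which forces $G$ itself to be abelian, is immediate, since then $H\leqslant G$ is already abelian.
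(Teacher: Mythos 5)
Your proof is correct and follows essentially the same strategy as the paper's: assume two elements of $\bigcap_{i=1}^{m}C_G(g_i)$ fail to commute and use them to manufacture a noncommuting set of size $m+1$, contradicting maximality. The only cosmetic difference is the set produced -- the paper uses $\{a,\,bg_1,\,bg_2,\ldots,bg_m\}$ while you use $\{g_1x,\,g_1y,\,g_2,\ldots,g_m\}$ -- and your extra care about distinctness of the $m+1$ elements is a welcome detail the paper leaves implicit.
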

\begin{proof}  Assume the contrary and choose $a, b\in \cap_{i=1}^{m} C_G(g_i)$ such that $ab\neq ba$. Then it is easy to see that $\{a, bg_1, bg_2, \ldots, bg_m\}$ is a noncommuting subset of $G$, 
a contradiction.  
\end{proof}
\section{Groups having an $n$-abelian partition}
Given a finite group $G$,  we denote by  ${\rm cs}(G)$ the set of conjugacy class sizes of $G$.
 It${\rm \hat{o}}$  proved that  \cite[Theorem 1]{Ito} {\em if ${\rm cs}(G)=\{1, m\}$,
then  $G$ is a direct product of a Sylow $p$-group of $G$ with an abelian group. In particular, then
$m$ is a power of  $p$}.
\begin{theorem}\label{th-1}
The following conditions on a nonabelian group $G$ are equivalent:
\begin{itemize}
\item[$(1)$] $G$ has a $2$-abelian partition with respect to an abelian subgroup $A$.
\item[$(2)$]  $G=P\times Q$, where $P\in {\rm Syl}_2(G)$ with $P/Z(P)\cong \mathbb{Z}_2\times \mathbb{Z}_2$  and $Q$ is abelian,
and $A=\langle Z(G), t\rangle$, where $t$ is an involution outside of $Z(G)$.
\end{itemize}
\end{theorem}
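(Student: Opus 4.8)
The plan is to prove the two implications separately, treating $(2)\Rightarrow(1)$ as a routine construction and reserving the real work for $(1)\Rightarrow(2)$. For $(2)\Rightarrow(1)$ I would first record that $Z(G)=Z(P)\times Q$, so $G/Z(G)\cong P/Z(P)\cong\mathbb{Z}_2\times\mathbb{Z}_2$. Since the Klein four-group has exactly three subgroups of order $2$ and $M/Z(G)$ cyclic forces $M$ abelian, the group $G$ has exactly three maximal abelian subgroups $M_0,M_1,M_2$, each of index $2$ and each containing $Z(G)$, with $C_G(g)=M_i$ for the unique $i$ such that the noncentral element $g$ lies in $M_i$. Hence $M_i\cap M_j=Z(G)$ for $i\neq j$, and $G\setminus Z(G)$ is the disjoint union of the three sets $M_i\setminus Z(G)$, each of cardinality $|Z(G)|\geqslant 2$ (as $Z(P)\neq 1$). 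Taking $A=M_0=\langle Z(G),t\rangle$, $A_1=M_1\setminus Z(G)$ and $A_2=M_2\setminus Z(G)$ then visibly yields a $2$-abelian partition with respect to $A$.

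For the substantive direction $(1)\Rightarrow(2)$, suppose $G=A\uplus A_1\uplus A_2$ is a $2$-abelian partition. The key first move is to pass from three commuting \emph{sets} to three abelian \emph{subgroups}: put $B_0=A$ and $B_i=\langle A_i\cup\{1\}\rangle$ for $i=1,2$. Each $A_i$ is a commuting set, so each $B_i$ is abelian, and $G=B_0\cup B_1\cup B_2$ is a union of three proper subgroups (proper since $G$ is nonabelian). No group is the union of two proper subgroups, so this cover is irredundant, and by Scorza's classical theorem on irredundant three-coverings each $B_i$ has index $2$ and $D:=B_0\cap B_1\cap B_2$ is normal with $G/D\cong\mathbb{Z}_2\times\mathbb{Z}_2$. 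I would then identify $D$ with $Z(G)$: every $d\in D$ lies in each abelian $B_i$, and the $B_i$ cover $G$, so $d$ is central; conversely each index-$2$ subgroup $B_i$ must contain $Z(G)$, for otherwise $G=Z(G)B_i$ would be generated by the abelian group $B_i$ together with central elements and hence be abelian. Thus $G/Z(G)\cong\mathbb{Z}_2\times\mathbb{Z}_2$.

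With $G/Z(G)\cong\mathbb{Z}_2\times\mathbb{Z}_2$ in hand, the centralizer of every noncentral element is one of the three index-$2$ subgroups, so each noncentral conjugacy class has size $2$ and $\mathrm{cs}(G)=\{1,2\}$. It\^{o}'s theorem, quoted above, then gives $G=P\times Q$ with $P\in\mathrm{Syl}_2(G)$ and $Q$ abelian, and since $G/Z(G)=P/Z(P)$ is the Klein four-group we obtain $P/Z(P)\cong\mathbb{Z}_2\times\mathbb{Z}_2$. It remains to pin down $A$. Using the three maximal abelian subgroups $M_0,M_1,M_2$ above, each of the commuting sets lies in some $M_i$, and no two of them can lie in the same one (else $G$ would be covered by two proper subgroups); after relabelling, $A\subseteq M_0$, $A_1\subseteq M_1$, $A_2\subseteq M_2$. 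Every element of $M_0\setminus Z(G)$ is then forced into $A$, and since $A\subseteq M_0$ is a subgroup containing more than half of $M_0$, a divisibility count gives $A=M_0$; in particular $[A:Z(G)]=2$, so $A=\langle Z(G),t\rangle$ for any $t\in A\setminus Z(G)$.

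The main obstacle is precisely this last point: recovering the exact shape of $A$, and especially the assertion that $t$ may be chosen to be an involution. That $A$ is a maximal abelian subgroup of index $2$ over $Z(G)$ follows cleanly from the counting above, but the existence of an involution in the nontrivial coset $A\setminus Z(G)$ is delicate, since it amounts to locating a noncentral involution of $P$ inside the index-$2$ abelian subgroup $A\cap P$, and I would want to isolate exactly which $2$-groups $P$ with $P/Z(P)\cong\mathbb{Z}_2\times\mathbb{Z}_2$ guarantee this. (The quaternion group is the cautionary example here, its three maximal abelian subgroups being cyclic of order $4$ with no noncentral involution.) Establishing $G/Z(G)\cong\mathbb{Z}_2\times\mathbb{Z}_2$ through the Scorza covering and then feeding $\mathrm{cs}(G)=\{1,2\}$ into It\^{o}'s theorem I expect to be the conceptual heart of the argument; the direct-product bookkeeping and the $(2)\Rightarrow(1)$ construction are then routine.
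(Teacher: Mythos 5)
Your argument is correct and rests on the same two pillars as the paper's --- Scorza's theorem on irredundant coverings by three proper subgroups, followed by ${\rm cs}(G)=\{1,2\}$ and It\^{o}'s theorem --- but you feed Scorza a different covering. The paper fixes a maximal noncommuting triple $\{x,y,z\}$ (using the pigeonhole bound ${\rm nc}(G)\leqslant 3$ and the fact that no group is a union of two proper subgroups) and covers $G$ by the three centralizers $C_G(x)\cup C_G(y)\cup C_G(z)$; it then needs a separate argument (maximality of the noncommuting set forces each $C_G(x)\setminus K$ to be a commuting set, hence each centralizer abelian) before it can identify the common intersection $K$ with $Z(G)$. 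You instead cover $G$ by $A\cup\langle A_1\rangle\cup\langle A_2\rangle$, where abelianness of the three subgroups is immediate from the definition of a commuting set, so the identification $B_0\cap B_1\cap B_2=Z(G)$ comes essentially for free. Your route is slightly more economical; the paper's has the side benefit of exhibiting the centralizer structure explicitly. Your final counting argument pinning down $A=M_0$ is sound and is in fact \emph{more} than the paper provides: its proof of $(1)\Rightarrow(2)$ establishes $P/Z(P)\cong\mathbb{Z}_2\times\mathbb{Z}_2$ and stops, never addressing the shape of $A$ at all.

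Your worry about the involution is well founded, and the paper does not resolve it either. The quaternion group $Q_8$ admits the $2$-abelian partition $\langle i\rangle\uplus\{j,-j\}\uplus\{k,-k\}$, yet its only involution is central, so condition $(2)$ as literally stated (with $t$ an involution outside $Z(G)$) cannot hold for $Q_8$. What is actually true, and what both your construction and the paper's $(2)\Rightarrow(1)$ construction really use, is only that $t^2\in Z(G)$, i.e.\ that $t$ is an involution modulo the centre (the paper's own justification that $A$ is abelian reads ``since $t_2^2\in Z(G)$''). So the one gap you isolate is a defect of the theorem's statement rather than of your proof, and with ``involution'' weakened to ``element whose square is central'' your argument closes it completely.
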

\begin{proof} $(1)\Rightarrow (2)$  Suppose that $G$ is a nonabelian group, which has a  $2$-abelian partition
$G=A\uplus A_1\uplus A_2$.  First of all, we notice that  every noncommuting set of $G$ can have at most three elements.
Now fix a  noncentral element $x$  of $G$.  Since $C_G(x)<G$, we can choose $y\in G$, such that  $x$ and $y$  do not commute.
 It is well known that a group cannot be written as the union of two proper subgroups,
thus  $C_G(x)\cup  C_G(y)<G$, and so  we can choose  $z$  in $G$, such that  $B=\{x, y, z\}$
is a  noncommuting set of $G$. Now, we have
$$G=C_x\cup C_y\cup C_z,$$
where $C_x=C_G(x)$, $C_y=C_G(y)$ and $C_z=C_G(z)$. Put $K=C_x\cap C_y\cap C_z$, which is 
an abelian subgroup of $G$, by Lemma \ref{lm-41}.
Indeed, by a  result of Scorza \cite{Scorza}, we have
\begin{itemize}
\item[{\rm (a)}] $[G:C_x]=[G:C_y]=[G:C_z]=2$,
\item[{\rm (b)}]  $K=C_x\cap C_y=C_x\cap C_z=C_y\cap C_z$, and
\item[{\rm (c)}] $K$ is a normal subgroup of $G$ and  the factor group $G/K$ is isomorphic to the Klein Four Group.
\end{itemize}
 Thus  $|x^G|=2$, and
since $x\in G\setminus Z(G)$
was arbitrary, it follows that  $\mbox{cs}(G)=\{1, 2\}$.
By It${\rm \hat{o}}$'s result \cite[Theorem 1]{Ito}, $G=P\times Q$, where $P\in {\rm Syl}_2(G)$  is a  nonabelian and $Q\leqslant Z(G)$.

On the other hand, $B$ is a maximal noncommuting set of $G$,  which forces $C_t\setminus K$ to be a commuting set of $G$
for each $t\in B$, and so the centralizer $C_t$ is abelian, because $C_t=\langle C_t\setminus K\rangle$. This implies that
$K=Z(G)$, and so $$\frac{P}{Z(P)}\cong \frac{P\times Q}{Z(P)\times Q}=\frac{G}{Z(G)}\cong \mathbb{Z}_2\times \mathbb{Z}_2,$$
and the proof is complete.

$(2)\Rightarrow (1)$  Let $\{t_1, t_2, t_3, t_4\}$ be a transversal for $Z(G)$ in $G$, with $t_1\in Z(G)$.
Then, $G$ is a disjoint union:
 $$G=Z(G)\cup Z(G)t_2\cup Z(G)t_3\cup Z(G)t_4.$$
Put $A=Z(G)\cup Z(G)t_2$, $A_1=Z(G)t_3$ and $A_2=Z(G)t_4$.
Then $A$ is an abelian group (since $t_2^2\in Z(G)$), $A_1$ and $A_2$ are commuting sets, and
$G=A\uplus A_1\uplus A_2$ is a 2-abelian partition of $G$.
\end{proof}
\begin{theorem}\label{th-2}
The following conditions on a nonabelian group $G$ are equivalent:
\begin{itemize}
\item[$(1)$] $G$ has a $3$-abelian partition with respect to an abelian subgroup $A$.
\item[$(2)$]  $|Z(G)|\geqslant 2$ and $G/Z(G)$ is isomorphic to one of the following groups: 
$${\Bbb Z}_2\times {\Bbb Z}_2,  \   {\Bbb Z}_3\times {\Bbb Z}_3, \  {\Bbb S}_3.$$
In the first case, $A=Z(G)$, and in two other cases $A=\langle Z(G), x\rangle$, where $x$ is an element of order $3$ outside of $Z(G)$ .
\end{itemize}
\end{theorem}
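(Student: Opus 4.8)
The plan is to mirror the structure of the proof of Theorem \ref{th-1}, treating the two implications separately and using the classification of conjugacy-class-size patterns as the main engine. For the direction $(1)\Rightarrow(2)$, suppose $G=A\uplus A_1\uplus A_2\uplus A_3$ is a $3$-abelian partition. The pigeon-hole principle gives that every noncommuting subset of $G$ has at most $4$ elements, so $\mathrm{nc}(G)\leqslant 4$. I would fix a noncentral $x$ and, exactly as before, build up a maximal noncommuting set $B=\{x,y,\ldots\}$; the bound $\mathrm{nc}(G)\leqslant 4$ restricts $|B|$ to be $3$ or $4$. The key arithmetic step is to pin down $\mathrm{cs}(G)$: a maximal noncommuting set of size $m$ together with the covering $G=\bigcup_{g\in B}C_G(g)$ forces each index $[G:C_G(x)]$ to be small, whence $x^G$ is small. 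I expect that the analysis splits according to whether $\mathrm{nc}(G)=3$ (the size-$3$ case, yielding $G/Z(G)\cong\mathbb{Z}_2\times\mathbb{Z}_2$ via the same Scorza/It\^{o} argument as in Theorem \ref{th-1}) or $\mathrm{nc}(G)=4$ (which should produce the $\mathbb{Z}_3\times\mathbb{Z}_3$ and $\mathbb{S}_3$ cases).

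To identify the two remaining quotients, I would invoke the classification of groups with $G/Z(G)$ of small order. A standard fact is that if $G/Z(G)$ is nonabelian of minimal order it must be $\mathbb{S}_3$ (the smallest nonabelian group that can occur as a central quotient), while if $G/Z(G)$ is abelian and $G$ is nonabelian then $G/Z(G)$ is noncyclic of the form $\mathbb{Z}_p\times\mathbb{Z}_p$ or larger. The noncommuting-set bound $\mathrm{nc}(G)\leqslant 4$ eliminates everything except the three listed quotients: in each candidate I would count the maximal noncommuting sets directly in $G/Z(G)$ (since commuting in $G$ is governed by $G/Z(G)$ when centralizers are abelian) and check that $\mathrm{nc}=4$ is achievable precisely for $\mathbb{Z}_3\times\mathbb{Z}_3$ and $\mathbb{S}_3$, and $\mathrm{nc}=3$ for $\mathbb{Z}_2\times\mathbb{Z}_2$. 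The centralizer-abelian argument (as in Theorem \ref{th-1}, where maximality of $B$ forced each $C_t$ to be abelian, giving $K=Z(G)$) should again show that $K=Z(G)$, so the quotient $G/Z(G)$ is exactly one of the three.

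For $(2)\Rightarrow(1)$ the construction is explicit in each case. When $G/Z(G)\cong\mathbb{Z}_2\times\mathbb{Z}_2$, take a transversal $\{1,t_2,t_3,t_4\}$ and set $A=Z(G)$, $A_i=Z(G)t_{i+1}$ for $i=1,2,3$; each coset is a commuting set and $A$ is abelian, giving the partition. When $G/Z(G)\cong\mathbb{Z}_3\times\mathbb{Z}_3$ or $\mathbb{S}_3$, I would choose $x$ of order $3$ outside $Z(G)$, put $A=\langle Z(G),x\rangle$ (abelian since $x$ commutes with $Z(G)$ and $x^3\in Z(G)$, so $A=Z(G)\cup Z(G)x\cup Z(G)x^2$), and then group the remaining six cosets of $Z(G)$ into three commuting pairs. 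The content here is verifying that the remaining cosets partition into three abelian subsets: in the $\mathbb{S}_3$ case the three involutions give three rank-one centralizers, and in the $\mathbb{Z}_3\times\mathbb{Z}_3$ case the four cyclic subgroups of order $3$ share only $A$'s contribution, so the leftover cosets pair up correctly.

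\textbf{The main obstacle} I anticipate is the forward direction's case $\mathrm{nc}(G)=4$: unlike the clean size-$3$ situation of Theorem \ref{th-1}, where Scorza's theorem immediately yields $[G:C_x]=2$ and the Klein-four quotient, a maximal noncommuting set of size $4$ does not by itself force a unique class-size pattern, and I may have to separate the abelian-quotient subcase ($\mathrm{cs}(G)=\{1,3\}$, leading via It\^{o} to $G=P\times Q$ with $P$ a $3$-group and $G/Z(G)\cong\mathbb{Z}_3\times\mathbb{Z}_3$) from the nonabelian-quotient subcase ($G/Z(G)\cong\mathbb{S}_3$), ruling out all larger or otherwise-structured quotients by showing they force $\mathrm{nc}(G)\geqslant 5$. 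Carefully excluding these spurious candidates, rather than the explicit constructions, is where the real work lies.
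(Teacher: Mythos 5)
Your reverse direction $(2)\Rightarrow(1)$ matches the paper's: the same explicit coset constructions ($A=Z(G)$ with the three nontrivial cosets for the Klein-four quotient; $A=\langle Z(G),x\rangle$ with the remaining cosets grouped into the cyclic subgroups of order $3$, resp.\ the three involution-cosets, in the other two cases), and this part of your proposal is sound. The forward direction, however, has a genuine gap exactly where you flag ``the main obstacle.'' The entire content of $(1)\Rightarrow(2)$ is the step from ``$G$ has a maximal noncommuting set of size $4$'' to ``$G/Z(G)\cong{\Bbb Z}_3\times{\Bbb Z}_3$ or ${\Bbb S}_3$,'' and you do not carry it out; you only describe two subcases you expect to arise and assert that all other quotients can be excluded. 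The paper does not do this from scratch either: it converts the bound ${\rm nc}(G)\leqslant 4$ into a bound on the number of distinct centralizers $\#{\rm Cent}(G)$ via Lemma 2.4 of Abdollahi--Jafarian Amiri--Hassanabadi, and then quotes the Belcastro--Sherman classification of groups with that few centralizers (their Theorems 2 and 4), which yields precisely the three quotients. Without either citing such a classification or actually proving it, your argument is incomplete.

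Concretely, the sketch you give for closing the gap does not yet work. In the abelian-quotient subcase you assume ${\rm cs}(G)=\{1,3\}$, but a maximal noncommuting set $B=\{g_1,g_2,g_3,g_4\}$ with $G=\bigcup_i C_G(g_i)$ only gives $\sum_i [G:C_G(g_i)]^{-1}\geqslant 1$, which admits several index patterns (e.g.\ $(2,2,k,\ell)$, $(2,3,3,3)$, $(2,4,4,4)$, $(3,3,3,3)$, \dots), so pinning down ${\rm cs}(G)$ and then showing the Sylow $p$-subgroup from It\^{o}'s theorem has central quotient exactly ${\Bbb Z}_p\times{\Bbb Z}_p$ (not larger) requires real work. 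In the nonabelian-quotient subcase, ``the smallest nonabelian central quotient is ${\Bbb S}_3$'' does not exclude larger nonabelian quotients; you must show each of them forces ${\rm nc}(G)\geqslant 5$, and your proposed method for doing so (``count noncommuting sets in $G/Z(G)$ since commuting is governed by $G/Z(G)$ when centralizers are abelian'') is circular, because the abelianness of centralizers and the identification $K=Z(G)$ are conclusions of the argument, not hypotheses. You also omit the (easy but necessary) verification that $|Z(G)|\geqslant 2$, which the paper gets from $|G|\geqslant 1+2+2+2=7$ together with the list of possible quotients. I recommend either invoking the $n$-centralizer classification as the paper does, or the closely related classification of groups with covering number $3$ or $4$ (Scorza, Cohn), and then supplying the separate argument that the kernel of the relevant quotient is $Z(G)$.
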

\begin{proof} $(1)\Rightarrow (2)$  Suppose that $G$ is a nonabelian group, which has a  $3$-abelian partition
$G=A\uplus A_1\uplus A_2\uplus A_3$.  
First of all, we notice that   ${\rm nc}(G)=3$ or $4$. 
It now follows from Lemma 2.4 in \cite{Abdollahi} that  $G$ is either $3$-centralizer or $4$-centralizer, respectively. Therefore, by Theorems 2 and 4 in 
\cite{Belcastro}, we conclude that $G$ modulo its center is isomorphic to one of the groups: ${\Bbb Z}_2\times {\Bbb Z}_2$, ${\Bbb Z}_3\times {\Bbb Z}_3$, or  ${\Bbb S}_3$, as required.  Finally, since $G$ is a nonabelian group 
with at least $7$ elements, $|Z(G)|\geqslant 2$.

$(2)\Rightarrow (1)$  Let $Z=Z(G)$. We treat separately the different cases:
\begin{itemize}
\item[$\rm (a)$] $G/Z\cong {\Bbb Z}_2\times {\Bbb Z}_2$. In this case, there are noncentral
 elements $x_1$, $x_2$, and $x_3$ of $G$ such that $G=Z\uplus  Zx_1\uplus  Zx_2\uplus  Zx_3$,  which is a $3$-abelian partition of $G$.
\item[$\rm (b)$]  $G/Z(G)\cong {\Bbb Z}_3\times {\Bbb Z}_3$.   In this case, we have $$G/Z\cong \langle Zx, Zy \ | \ x^3, y^3, [x,y]\in Z\rangle,$$ which implies that 
$$G=Z\cup Zx\cup Zx^2\cup Zy\cup Zy^2\cup Zxy\cup Zx^2y^2\cup Zxy^2\cup Zx^2y.$$ 
We put
\begin{itemize}
\item[] $A:=Z\cup Zx\cup Zx^2=\langle Z, x\rangle$,
\item[]  $A_1:=Zy\cup Zy^2=\langle Z, y\rangle \setminus Z$,
\item[] $A_2:=Zxy\cup Zx^2y^2=\langle Z, xy\rangle \setminus Z$,
\item[] $A_3:=Zxy^2\cup Zx^2y=\langle Z, xy^2\rangle \setminus Z$.
\end{itemize}
Then $G=A\uplus A_1\uplus A_2\uplus A_3$  is a $3$-abelian partition of $G$.
\item[$\rm(c)$] $G/Z\cong {\Bbb S}_3$. In this case, we have $G/Z\cong \langle Zx, Zy \ | \ x^3, y^2, (xy)^2\in Z\rangle$, which implies that 
$$G=Z\cup Zx\cup Zx^2\cup Zy\cup Zyx\cup Zyx^2.$$ 
Put $A:=Z\cup Zx\cup Zx^2=\langle Z, x\rangle$. Then, $G=A\uplus  Zy\uplus  Zyx\uplus  Zyx^2$  is a $3$-abelian partition of $G$.
\end{itemize}
The proof is complete. \end{proof}
\section{ Computing some explicit formulas for $\kappa(G)$}
We here consider the problem of finding the tree-number of commuting graphs associated with
certain finite groups. 
Our first result concerns Frobenius groups. 
\begin{theorem}
 Let $H\subset G$ be a subgroup of nonabelian group $G$ such that the commuting graph ${\cal C}(G\setminus H)$
is empty. Then, $H$ is abelian of odd order and $G$ is a Frobenius group with kernel $H$ and complement $\mathbb{Z}_2$. In particular, $\kappa(G)=|H|^{|H|-2}$.
\end{theorem}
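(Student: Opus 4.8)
The plan is to extract strong local information from the hypothesis, bootstrap it to the global Frobenius structure, and then read off $\kappa(G)$ from the Frobenius formula recalled in the introduction. Note first that ``${\cal C}(G\setminus H)$ is empty'' means precisely that no two distinct elements of $G\setminus H$ commute. Fixing $x\in G\setminus H$, I would observe that $C_G(x)\cap(G\setminus H)=\{x\}$, so $C_G(x)=(C_G(x)\cap H)\uplus\{x\}$. Since $C_G(x)\cap H$ is a subgroup of the group $C_G(x)$, Lagrange forces $|C_G(x)\cap H|$ to divide $|C_G(x)|=|C_G(x)\cap H|+1$, whose only solution is $|C_G(x)\cap H|=1$. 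Hence $C_G(x)=\langle x\rangle=\{1,x\}$ and $x^2=1$: \emph{every} element of $G\setminus H$ is an involution with centralizer of order $2$. In particular $H\neq1$, since otherwise every element of $G$ would be an involution and $G$ would be abelian.

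Next I would fix an involution $t\in G\setminus H$ and show $H$ is abelian of odd order. For $h\in H$ we have $th\notin H$, so $th$ is an involution; expanding $(th)^2=1$ yields $tht^{-1}=h^{-1}$. Thus conjugation by $t$ restricts to inversion on $H$, and as this is an automorphism, inversion is a homomorphism on $H$, forcing $H$ abelian. The action is moreover fixed-point-free: if $h\in H$ had $h^2=1$ then $tht^{-1}=h$, i.e.\ $h\in C_G(t)=\{1,t\}$, whence $h=1$. So $H$ contains no involution and $|H|$ is odd.

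The crux is then to upgrade this to $[G:H]=2$. The computation above shows every involution $s\in G\setminus H$ inverts $H$. Given two such involutions $s,t$, the product $st$ centralizes $H$; were $st$ to lie in $G\setminus H$ it would be an involution and hence would also invert $H$, giving $h=h^{-1}$ for all $h\in H$, contradicting that $H$ is nontrivial of odd order. Therefore $st\in H$, so $s\in Ht$. As all elements of $G\setminus H$ are such involutions, $G\setminus H\subseteq Ht$, while clearly $Ht\subseteq G\setminus H$; hence $G\setminus H=Ht$ and $[G:H]=2$. Consequently $H\trianglelefteq G$ and $G=H\rtimes\langle t\rangle$ with $\langle t\rangle\cong\mathbb{Z}_2$ acting fixed-point-freely by inversion, i.e.\ $G$ is Frobenius with kernel $H$ and complement $\mathbb{Z}_2$. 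Finally I would apply the Frobenius formula $\kappa(G)=\kappa(F)\kappa(C)^{|F|}$ from \cite{Mahmoudifar} with $F=H$ and $C=\mathbb{Z}_2$: since $H$ is abelian, ${\cal C}(H)=K_{|H|}$ and Cayley's formula gives $\kappa(H)=|H|^{|H|-2}$, while $\kappa(\mathbb{Z}_2)=\kappa(K_2)=1$, so $\kappa(G)=|H|^{|H|-2}\cdot 1^{|H|}=|H|^{|H|-2}$.

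I expect the main obstacle to be the index-two step: converting the purely local involution data into the global claim $G\setminus H=Ht$, where the decisive trick is that a product of two external involutions simultaneously centralizes $H$ and, if external, would invert it, which is incompatible with $H$ being nontrivial and odd. As an alternative to citing the Frobenius formula in the last step, one could compute $\kappa(G)$ directly: here $Z(G)=1$ and $\Delta(G)={\cal C}(G\setminus\{1\})=K_{|H|-1}\oplus\overline{K_{|H|}}$, so the Laplacian spectrum of ${\cal C}(G)=K_1\vee\Delta(G)$ assembles via Lemma \ref{elementary0} and, fed into Eq.\ (\ref{eq1}), recovers $|H|^{|H|-2}$.
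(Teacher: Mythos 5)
Your proof is correct, and its key structural step takes a genuinely different route from the paper's. Both arguments open identically in substance: every $x\in G\setminus H$ satisfies $C_G(x)=\{1,x\}$ and is therefore an involution (you get this by applying Lagrange to $C_G(x)\cap H$ inside $C_G(x)$; the paper gets the same thing from the observation that an abelian subgroup not contained in $H$ must have order $2$ --- the same ``$k$ divides $k+1$'' trick). From there the paper goes global via Sylow theory and transfer: the centralizer condition forces the Sylow $2$-subgroups to have order $2$ (via $Z(S)\leqslant C_G(\bar h)$), the normal $2$-complement theorem then yields a normal subgroup $N$ of odd order $n$ with $[G:N]=2$, and $N=H$ because $N$, having odd order, avoids all the involutions filling $G\setminus H$. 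You instead stay entirely elementary: the computation $(th)^2=1$ shows every external involution inverts $H$, which immediately gives $H$ abelian of odd order (a point the paper leaves implicit, deducing it only a posteriori from the Frobenius structure), and then the product of two external involutions centralizes $H$ while any external element would have to invert it --- incompatible with $H$ nontrivial of odd order --- so the product lands in $H$ and $[G:H]=2$ falls out with no appeal to normal complements. That is a real simplification. The two proofs also diverge cosmetically at the end: the paper computes $\kappa(G)$ directly from ${\cal C}(G)=K_1\vee\left(K_{|H|-1}\oplus K_{|H|}^c\right)$ via Lemma \ref{elementary0} and Eq. (\ref{eq1}), whereas you invoke the Frobenius formula $\kappa(G)=\kappa(F)\kappa(C)^{|F|}$ from \cite{Mahmoudifar}; both are valid, and you correctly note the direct computation as an alternative.
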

\begin{proof} Let $\overline{H}=G\setminus H$. Clearly, $\overline{H}$ is nonempty and  $G=H\uplus \overline{H}$. If $A<G$ is an abelian subgroup,
then either $A\leqslant H$ or $|A|=2$. In order to prove this, note that
$A\cap \overline{H}$ can have at most one element, so if $A$ is not
contained in $H$, then the subgroup $A\cap H$ has order $|A|-1$.
Since this must divide $|A|$, we conclude that $|A|=2$.

Let $\bar{h}$ be an arbitrary element in $\overline{H}$. If $1\neq g\in C_G(\bar{h})$, then $\langle g, \bar{h}\rangle$ is abelian and is not
contained in $H$, so it has order $2$, and thus $g=\bar{h}$ and $\bar{h}$ has
order 2 and $C_G(\bar{h})=\langle \bar{h}\rangle$.
Now $\bar{h}$ is contained in some Sylow $2$-subgroup $S$ of $G$. Since
$Z(S)$ is nontrivial, $Z(S)\leqslant C_G(\bar{h})=\langle \bar{h}\rangle$, so
$\bar{h}$ is central in $S$, and thus $S\leqslant C_G(\bar{h})=\langle
\bar{h}\rangle$ has order $2$. Then $|G|=2n$, where $n$ is odd. It
follows that $G$ has a normal subgroup $N$ of order $n$.
Every element of $\overline{H}$ has order $2$ so $N\leqslant H$, and thus
$N=H$, and $H$ has order $n$. Finally, since $\bar{h}$ centralizes no nonidentity element of $H$, the result follows.

For the last statement, we note that $${\cal C}(G)=K_1\vee \left(K_{h-1}\oplus K_h^c\right),$$
where $h=|H|$. We apply Lemma \ref{elementary0} to $\Gamma_1=K_1$ and $\Gamma_2=K_{h-1}\oplus K_h^c$, and use Eq. (\ref{eq1}).
\end{proof}

In what follows, we shall give an explicit formula for $\kappa(L_2(2^n))$.
Let $G=L_2(q)$, where $q=2^n\geqslant 4$.  
Before we start,  we need some well known facts about the simple groups $G$, which are proven in \cite{Hup}:
\begin{itemize}
\item[{\rm (1)}]   $|G|=q(q^2-1)$ and $\mu(G)=\{2, q-1, q+1\}$.

\item[{\rm (2)}] Let $P$ be a Sylow $2$-subgroup of $G$. Then  $P$ is an elementary abelian $2$-group of order
$q$, which is a TI-subgroup, and  $|N_G(P)|=q(q-1)$.

\item[{\rm (3)}] Let $A\subset G$ be a cyclic subgroup of order $q-1$. Then $A$ is a TI-subgroup and
the normalizer $N_G(A)$ is a dihedral group of order $2(q-1)$.

\item[{\rm (4)}] Let $B\subset G$ be a cyclic subgroup of order $q+1$. Then  $B$ is a TI-subgroup
and the normalizer $N_G(B)$ is a dihedral group of order $2(q+1)$.
\end{itemize}

We recall that a subgroup  $H\leqslant G$  is a {\em TI-subgroup} (trivial intersection subgroup) if for every $g\in G$, either $H^g=H$ or $H\cap H^g=\{1\}$.

\begin{theorem} \label{char2} Let  $q=2^n$, where $n\geqslant 2$ is a natural number. Then there holds
$$\kappa(L_2(q))=q^{(q-2)(q+1)} (q-1)^{(q-3)q(q+1)/2}(q+1)^{(q-1)^2q/2}.$$
In particular, when $q=4$, we have $L_2(4)\cong L_2(5)\cong {\Bbb A}_5$ and so $$\kappa(L_2(5))=\kappa({\Bbb A}_5)=2^{20}\cdot 3^{10}\cdot 5^{18}.$$
\end{theorem}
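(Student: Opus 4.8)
The plan is to use that $G=L_2(q)$ with $q=2^n$ is centerless together with the TI-property of its maximal abelian subgroups to show that the reduced graph $\Delta(G)={\cal C}(G\setminus\{1\})$ is a vertex-disjoint union of cliques, and then to read off $\kappa(G)$ from the centerless formula of Section 2. First I would observe that, by the facts (2)--(4), the maximal abelian subgroups of $G$ fall into three conjugacy classes: the Sylow $2$-subgroups $P$ (elementary abelian of order $q$), the cyclic subgroups $A$ of order $q-1$, and the cyclic subgroups $B$ of order $q+1$, and that each family consists of TI-subgroups. The key point is uniqueness: since $q$, $q-1$, $q+1$ are pairwise coprime (as $q$ is even), two maximal abelian subgroups of different types meet trivially, while two distinct conjugates of the same type meet trivially by the TI-property. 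Hence every nonidentity element lies in a unique maximal abelian subgroup, and (since any abelian subgroup sits inside a maximal one) two nonidentity elements commute precisely when they lie in the same one. This shows $\Delta(G)$ is a disjoint union of cliques, one clique $K_{|M|-1}$ for each maximal abelian subgroup $M$.

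Next I would count the cliques by the orbit--stabilizer argument, using the normalizer orders in (2)--(4). There are $|G|/|N_G(P)|=q+1$ Sylow $2$-subgroups (each a $K_{q-1}$), $|G|/|N_G(A)|=q(q+1)/2$ conjugates of $A$ (each a $K_{q-2}$), and $|G|/|N_G(B)|=q(q-1)/2$ conjugates of $B$ (each a $K_q$). As a check, the vertex count $(q+1)(q-1)+\frac{q(q+1)}{2}(q-2)+\frac{q(q-1)}{2}q$ equals $|G|-1=q^3-q-1$, confirming the partition. Thus
$$\Delta(G)=(q+1)K_{q-1}\oplus \frac{q(q+1)}{2}K_{q-2}\oplus \frac{q(q-1)}{2}K_q.$$

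The remaining computation is mechanical. Since the Laplacian spectrum of $K_s$ is $s$ with multiplicity $s-1$ together with a single $0$, Lemma \ref{elementary0}(1) gives the nonzero Laplacian eigenvalues of $\Delta(G)$ as $q-1$ with multiplicity $(q+1)(q-2)$, then $q-2$ with multiplicity $q(q+1)(q-3)/2$, and $q$ with multiplicity $q(q-1)^2/2$. Because $Z(G)=1$, the formula from Section 2 reads $\kappa(G)=\prod_{\mu\neq 0}(\mu+1)$, the product over the nonzero eigenvalues of $\Delta(G)$ (the omitted zero eigenvalues contribute trivial factors $1$). Substituting the three eigenvalues with their multiplicities yields
$$\kappa(G)=q^{(q-2)(q+1)}(q-1)^{(q-3)q(q+1)/2}(q+1)^{(q-1)^2q/2},$$
and the case $q=4$ specializes to $\kappa({\Bbb A}_5)=2^{20}\cdot 3^{10}\cdot 5^{18}$.

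I expect the only genuinely delicate point to be the first step: rigorously establishing that $\Delta(G)$ is a disjoint union of cliques. Everything hinges on the claim that the centralizer of each nonidentity element of $L_2(q)$ is one of $P$, $A$, $B$ (equivalently, that these exhaust the maximal abelian subgroups), which I would deduce from the TI-facts (2)--(4) and the coprimality of the three orders. Once the partition into punctured maximal abelian subgroups is in place, the counting is pure bookkeeping and the spectral identity is immediate from Lemma \ref{elementary0} and Eq. (\ref{eq1}).
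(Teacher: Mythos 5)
Your proposal is correct and follows essentially the same route as the paper: both identify ${\cal C}(L_2(q))$ as $K_1$ joined to a disjoint union of cliques coming from the punctured conjugates of $P$, $A$, $B$ (via the TI-property and the normalizer indices $q+1$, $q(q+1)/2$, $q(q-1)/2$), and then extract $\kappa$ mechanically. The only cosmetic difference is that you finish with the Laplacian-spectrum formula for centerless groups while the paper invokes Cayley's formula blockwise; these give the identical computation.
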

\begin{proof}
Let $G=L_2(q)$, $q=2^n\geqslant 4$.
As already mentioned,  $G$ contains abelian subgroups
$P$, $A$, $B$, of orders $q$, $q-1$, $q+1$, respectively,
every two  distinct conjugates of them intersect trivially and every element
of $G$ is a conjugate of an element in $P\cup A\cup B$.
Let $$G=N_Pu_1\cup \cdots \cup N_Pu_r= N_Av_1\cup \cdots \cup N_Av_s= N_Bw_1\cup \cdots \cup N_Bw_t,$$
be coset decompositions of $G$ by  $N_P=N_G(P)$, $N_A=N_G(A)$ and $N_B=N_G(B)$, where
$r=[G:N_P]=q+1$, $s=[G:N_A]=q(q+1)/2$ and  $t=[G:N_B]=(q-1)q/2$.
Then, we have
$$G=P^{u_1}  \cup \cdots \cup P^{u_r}\cup A^{v_1}\cup \cdots \cup A^{v_s}\cup  B^{w_1} \cup \cdots \cup B^{w_t}.$$
Note that if $x$ is a nonidentity element of $P^{u_i}$ (resp.  $A^{v_j}$, $B^{w_k}$), the centralizer $C_G(x)$ coincides with $P^{u_i}$ (resp.  $A^{v_j}$, $B^{w_k}$) for $1\leqslant i\leqslant r$ (resp. $1\leqslant j\leqslant s$, $1\leqslant k\leqslant t$). This shows that 
$${\cal C}(G)=K_1\vee \left(rK_{q-1}\oplus sK_{q-2}\oplus tK_{q}\right). $$
Applying Cayley's formula to commuting graphs associated with these abelian subgroups yields
$$\kappa(G)= \kappa(P)^r\cdot  \kappa(A)^s\cdot \kappa(B)^t=q^{(q-2)r}\cdot (q-1)^{(q-3)s}\cdot
 (q+1)^{(q-1)t},$$
and the result follows.
\end{proof}

\noindent {\em Remark.}  We know that $L_2(2)\cong S_3\cong {\Bbb Z}_3\rtimes {\Bbb Z}_2$  is a Frobenius group of order $6$.  It is routine to check that  $\kappa(L_2(2))=3$ (see also  \cite[Lemma 2.9]{Mahmoudifar}).

In what follows, we will concentrate on nonabelian groups $G$ in which the centralizer of every noncentral element of $G$ is abelian. Such groups are called AC-groups.  The smallest nonabelian AC-group is $S_3$. As a matter of fact, there are many infinite families of AC-groups, such as: 
\begin{itemize}
\item Dihedral groups $D_{2k}$ $(k\geqslant 3)$, defined by  $$D_{2k}=\langle x, y \ | \ x^k=y^2=1, yxy^{-1}=x^{-1} \rangle.$$ 
\item Semidihedral groups $SD_{2^k}$ $(k\geqslant 4)$,  defined by 
$$SD_{2^k}=\langle x, y \ | \ x^{2^{k-1}}=y^2=1, yxy^{-1}=x^{-1+2^{k-2}}\rangle. $$
\item Generalized quaternion groups $Q_{4k}$ $(k\geqslant 2)$,  defined by
$$Q_{4k}=\langle x, y \ | \ x^{2k}=1, y^2=x^k,  yxy^{-1}=x^{-1}\rangle. $$
\item Simple groups $L_2(2^k)$ $(k\geqslant 2)$,  and general linear groups ${\rm GL}(2, q)$, $q=p^k>2$, $p$  a prime. 
\end{itemize}

We now return to the general case. Let $G$ be a nonabelian $AC$-group of order $n$ with center $Z=Z(G)$ of order $m$. 
Then,  by Eq. (2), we have 
$${\cal C}(G)=K_m\vee \Delta(G),$$
where $\Delta (G)={\cal C}(G\setminus Z)$.  It is easy to see that $\Lambda:=\{C_G(x) \  | \ x\in G\setminus Z\}$ is the set of all
maximal abelian subgroups of $G$. Let $t:=|\Lambda|$ and $$\Lambda=\{C_G(x_1), C_G(x_2), \ldots, C_G(x_t)\}.$$ 
Put $C_i:=C_G(x_i)\setminus Z$ and $m_i=|C_i|$, for $i=1, 2, \ldots, t$. Then, we get
$$\Delta(G)=\bigoplus_{i=1}^{t} {\cal C}(C_i)=\bigoplus_{i=1}^{t} K_{m_i}.$$
It follows by Lemma \ref{elementary0} (1) that the eigenvalues of Laplacian matrix $L_{\Delta(G)}$ are:
$$0, \underbrace{m_1, m_1,  \ldots, m_1}_{m_1-1}, \  0, \underbrace{m_2, m_2,  \ldots, m_2}_{m_2-1}, \ldots,   \ 0, \underbrace{m_t, m_t, \ldots, m_t}_{m_t-1}. $$
Therefore, using Lemma \ref{elementary0} (2),  the eigenvalues of Laplacian matrix $L_{{\cal C} (G)}$ are:
$$n, \ \underbrace{n,  \ldots, n}_{m-1}, \  \underbrace{m_1+m,  \ldots, m_1+m}_{m_1-1}, \ldots,   \  \underbrace{m_t+m,  \ldots, m_t+m}_{m_t-1},  \ \underbrace{m,  \ldots, m}_{t-1}, \ 0, $$
and  using Eq. (\ref{eq1}), we get  the following theorem (see also \cite[Corollary 2.5.]{Mahmoudifar}):
\begin{theorem}\label{th-AC}  Let $G$ be a finite nonabelian $AC$-group of order $n$ with center of order $m$. 
Let $C_G(x_1), C_G(x_2), \ldots, C_G(x_t)$ be all distinct centralizers of noncentral elements of $G$ and
$m_i=|C_G(x_i)\setminus Z(G)|$, for $i=1, 2, \ldots, t$. Then, there holds
$$\kappa(G)=n^{m-1}m^{t-1}\prod_{i=1}^{t} (m_i+m)^{m_i-1}.$$
In particular, if $G$ is a centerless AC-group, then  we have 
$$\kappa(G)=\prod_{i=1}^{t} (m_i+1)^{m_i-1}=\prod_{i=1}^{t} |C_G(x_i)|^{|C_G(x_i)|-2}. $$
\end{theorem}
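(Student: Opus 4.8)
The plan is to read off the tree-number directly from the Laplacian spectrum of $\mathcal{C}(G)$ via Eq.~(\ref{eq1}), exploiting the decomposition $\mathcal{C}(G)=K_m\vee\Delta(G)$ recorded in Eq.~(\ref{eq2}). Since the center $Z=Z(G)$ supplies exactly the $m$ universal vertices, everything reduces to understanding the induced graph $\Delta(G)=\mathcal{C}(G\setminus Z)$ on the $\nu=n-m$ noncentral vertices.

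The key structural step, and the one I expect to carry the real content, is the claim that $\Delta(G)$ is a disjoint union of cliques, $\Delta(G)=\bigoplus_{i=1}^{t}K_{m_i}$; here the $AC$-hypothesis does all the work. First I would show that for a noncentral element $a$, the subgroup $C_G(a)$ is the \emph{unique} maximal abelian subgroup containing $a$: indeed $C_G(a)$ is abelian by hypothesis, hence maximal abelian (any abelian overgroup would centralize $a$), and if $M$ is any maximal abelian subgroup containing $a$ then $M\subseteq C_G(a)$ forces $M=C_G(a)$. Consequently the sets $C_i:=C_G(x_i)\setminus Z$ partition $G\setminus Z$, and two noncentral elements commute if and only if they lie in a common $C_G(x_i)$: if $a,b$ commute and are noncentral then $b\in C_G(a)$, and uniqueness forces $C_G(a)=C_G(b)$. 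Thus each $C_i$ induces a clique $K_{m_i}$ with $m_i=|C_G(x_i)|-m$, and there are no edges between distinct $C_i$, which is precisely the asserted decomposition.

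With the structure in hand the computation is routine bookkeeping. The Laplacian spectrum of $K_{m_i}$ is $0$ together with $m_i$ of multiplicity $m_i-1$, so by Lemma~\ref{elementary0}(1) the spectrum of $L_{\Delta(G)}$ is the multiset union of these, containing the eigenvalue $0$ with multiplicity $t$. Applying the join formula of Lemma~\ref{elementary0}(2) to $K_m\vee\Delta(G)$ shifts the surviving eigenvalues of $\Delta(G)$ by $m$ and those of $K_m$ by $\nu$, producing the eigenvalue $n$ with total multiplicity $m$, the eigenvalues $m_i+m$ with multiplicity $m_i-1$ for each $i$, the eigenvalue $m$ with multiplicity $t-1$ (the $t-1$ shifted zeros that survive), and a single $0$. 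Feeding the product of the nonzero eigenvalues into Eq.~(\ref{eq1}) gives
$$\kappa(G)=\frac{1}{n}\,n^{m}\,m^{t-1}\prod_{i=1}^{t}(m_i+m)^{m_i-1}=n^{m-1}m^{t-1}\prod_{i=1}^{t}(m_i+m)^{m_i-1},$$
as claimed.

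Finally, the centerless case is a mere specialization: setting $m=1$ kills the factors $n^{m-1}$ and $m^{t-1}$, while $m_i=|C_G(x_i)|-1$ gives $m_i+1=|C_G(x_i)|$ and $m_i-1=|C_G(x_i)|-2$, yielding both displayed forms. The only genuine obstacle in the whole argument is the partition claim for $\Delta(G)$; once the $AC$-property is converted into the uniqueness of the maximal abelian subgroup through a noncentral element, the remainder is spectral bookkeeping with Lemma~\ref{elementary0} and Eq.~(\ref{eq1}).
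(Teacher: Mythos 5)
Your proof is correct and follows essentially the same route as the paper, which derives the theorem in the text immediately preceding its statement: decompose $\mathcal{C}(G)=K_m\vee\Delta(G)$, observe that the $AC$-property makes $\Delta(G)$ a disjoint union of cliques $K_{m_i}$, and read off $\kappa(G)$ from the Laplacian spectrum via Lemma \ref{elementary0} and Eq.~(\ref{eq1}). The only difference is that you spell out the uniqueness-of-maximal-abelian-subgroup argument that the paper dismisses as ``easy to see,'' which is a welcome addition rather than a deviation.
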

Theorem  \ref{th-AC}, together with some rather technical computations  (see proofs of Corollary 3.7 and Propositions 4.1--4.3 in \cite{Das}) yields
 some special results which are summarized in Table 1.
\begin{center}
{\bf Table 1.}  $\kappa (G)$ for some  special AC-groups $G$.
\end{center}
$$
\begin{array}{lclllll}
\hline 
G &     & n &  m & t  &  m_i & \kappa(G) \\
\hline 
D_{2k} &  k \ \mbox{odd} & 2k & 1 & k+1 & k-1, 1, \ldots, 1 & k^{k-2} \\[0.1cm]
D_{2k} &  k \ \mbox{even} & 2k & 2 & \frac{k}{2}+1 & k-2, 2, \ldots, 2 & 2^{\frac{3k+2}{2}}k^{k-2} \\[0.1cm]
Q_{4k} &  k\geqslant 2  & 4k & 2 & k+1 & 2k-2, 2, \ldots, 2 & 2^{5k-1}k^{2k-2} \\[0.1cm]
SD_{2^k} &  k\geqslant 4  & 2^k  & 2 & 2^{k-1}+1 & 2^{k-1}-2, 2, \ldots, 2 & 2^{(2^{k-2}-1)(2k+1)+4}\\[0.1cm]
P &  p \ \mbox{prime} & p^3& p & p+1 & p^2-p, \ldots, p^2-p & p^{2p^3-5}\\ 
\hline
\end{array}
$$

\vspace{0.3cm}

Similarly, if $G$ is one of the almost simple groups $L_2(2^k)$ or ${\rm GL}(2,q)$, where 
$k\geqslant 2$ and $q=p^f>2$ ($p$ is a prime), then using technical computations similar to those in the proofs of Propositions 4.4 and 4.5
in \cite{Das},  we obtain:\\

$G=L_2(2^k)$:  \ \ $n=2^k(2^{2k}-1)$, $m=1$, $t=2^{4k-2}+2^k+1$, and $m_i$:
$$ \underbrace{2^k-1, \ldots, 2^k-1}_{2^k+1}, \  \underbrace{2^k-2, \ldots, 2^k-2}_{2^{k-1}(2^k+1)}, \ 
\underbrace{2^k, \ldots, 2^k}_{2^{k-1}(2^k-1)}. $$

$G={\rm GL}(2, q)$:  \ \  $n=(q^2-1)(q^2-q)$, $m=q-1$, $t=q^2+q+1$, and $m_i$:
$$ \underbrace{q^2-3q+2, \ldots, q^2-3q+2}_{q(q+1)/2},   \underbrace{q^2-q, \ldots, q^2-q}_{q(q-1)/2}, 
\underbrace{q^2-2q+1, \ldots, q^2-2q+1}_{q+1}.$$
Therefore, by Theorem \ref{th-AC}, we get 
$$\kappa(L_2(2^k))=2^{k(2^k-2)(2^k+1)} (2^k-1)^{2^{k-1}(2^k-3)(2^k+1)} (2^k+1)^{2^{k-1}(2^k-1)^2},$$
(compare this with Theorem \ref{char2}), and 
$$\kappa({\rm GL}(2, q))=q^{q^3-q^2-q-2} (q-1)^{\frac{q(3q^3+5)}{2}-2q^3-2q^2-4} (q+1)^{\frac{q(q^3+3)}{2}-q^3-2}.$$

In the next result,  we deal with an AC-group $G$ for which $G/Z(G)\cong \mathbb{Z}_p\times \mathbb{Z}_p$. 

\begin{theorem}\label{pp-abelian}  Let $G$ be a  group of order $n$  with nontrivial center $Z(G)$ of order $m$ such that 
$G/Z(G)\cong {\Bbb Z}_p\times {\Bbb Z}_p$,  where $p$ is  a prime. Then, there holds
$$\kappa(G)=p^{n+m-p-3}m^{n-2}.$$
\end{theorem}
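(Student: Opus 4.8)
The plan is to recognize $G$ as an AC-group with a very rigid centralizer structure and then feed the resulting numerical data directly into the formula of Theorem \ref{th-AC}.

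First I would verify that every noncentral element of $G$ has an abelian centralizer, and identify those centralizers precisely. Write $Z=Z(G)$ and fix $x\in G\setminus Z$. Since $G/Z$ is abelian, $\langle Z,x\rangle\leqslant C_G(x)$, and passing to the quotient, $C_G(x)/Z$ is a subgroup of $\mathbb{Z}_p\times\mathbb{Z}_p$ properly containing the nontrivial cyclic group $\langle Zx\rangle$ of order $p$. Because the subgroups of order $p$ are maximal in $\mathbb{Z}_p\times\mathbb{Z}_p$ and $C_G(x)\neq G$ (as $x\notin Z$), it follows that $C_G(x)/Z=\langle Zx\rangle$; hence $C_G(x)=\langle Z,x\rangle$ has order $mp$ and is abelian, being generated by central elements together with the single element $x$. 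This shows that $G$ is an AC-group whose maximal abelian subgroups are exactly the preimages in $G$ of the subgroups of order $p$ in $\mathbb{Z}_p\times\mathbb{Z}_p$.

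Next I would read off the data required by Theorem \ref{th-AC}. The number $t$ of distinct centralizers of noncentral elements equals the number of subgroups of order $p$ in $\mathbb{Z}_p\times\mathbb{Z}_p$, which is $(p^2-1)/(p-1)=p+1$; thus $t=p+1$. Each such centralizer has order $mp$, so $m_i=|C_G(x_i)\setminus Z|=mp-m=m(p-1)$ for every $i$, and in particular all the $m_i$ are equal. Finally $n=|G|=mp^2$. Substituting into the AC-formula $\kappa(G)=n^{m-1}m^{t-1}\prod_{i=1}^{t}(m_i+m)^{m_i-1}$ and using $m_i+m=mp$, the product collapses to a single power and we obtain
$$\kappa(G)=(mp^2)^{m-1}\,m^{p}\,(mp)^{(p+1)(m(p-1)-1)}.$$

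The only remaining work is exponent bookkeeping. A short expansion gives $(p+1)(m(p-1)-1)=mp^2-m-p-1=n-m-p-1$, after which separating the contributions to the powers of $m$ and of $p$ yields the total exponent $(m-1)+p+(n-m-p-1)=n-2$ for $m$ and $2(m-1)+(n-m-p-1)=n+m-p-3$ for $p$, which is exactly the claimed value $\kappa(G)=p^{n+m-p-3}m^{n-2}$. The genuinely structural step — and the one place where the hypothesis $G/Z(G)\cong\mathbb{Z}_p\times\mathbb{Z}_p$ is doing real work — is the first paragraph, where the AC-property together with the uniform count $t=p+1$ and common size $m_i=m(p-1)$ is established; once that rigidity is secured, the rest is a direct appeal to Theorem \ref{th-AC} followed by elementary arithmetic, so I anticipate no serious obstacle beyond careful bookkeeping.
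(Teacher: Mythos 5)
Your proposal is correct and follows essentially the same route as the paper: establish that $G$ is an AC-group with the $p+1$ centralizers $\langle Z(G),x_i\rangle$ of order $mp$, then substitute $n=mp^2$, $t=p+1$, $m_i=m(p-1)$ into Theorem \ref{th-AC} and simplify. (One wording slip: $C_G(x)/Z$ \emph{contains} $\langle Zx\rangle$ rather than properly containing it --- your subsequent deduction $C_G(x)/Z=\langle Zx\rangle$ is the intended and correct conclusion.)
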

\begin{proof}  First, we claim that $G$ is an AC-group. To show this, suppose  $x$ is a noncentral element of $G$. Then $Z(G)<C_G(x)<G$, and since $|G:Z(G)|=p^2$, 
we conclude that $|C_G(x):Z(G)|=p$.  This shows that $C_G(x)=\langle x, Z(G)\rangle$, which is an abelian group, and so $G$ is an AC-group, as claimed.
 
Since $G/Z(G)$ is an elementary abelian $p$-group of order $p^2$,  it has exactly $p+1$ distinct subgroups
of order $p$, say $\langle Z(G)x_1\rangle, \langle Z(G)x_2\rangle, \ldots, \langle Z(G)x_{p+1}\rangle$.
We claim that $C_G(x_1), C_G(x_2), \ldots, C_G(x_{p+1})$ are all distinct centralizers of noncentral elements 
of $G$. Suppose $y$ is an arbitrary noncentral element of $G$. Then $Z(G)y$ as an element of $G/Z(G)$ lies in 
a unique subgroup of $G/Z(G)$ of order $p$, say $\langle Z(G)x_i\rangle$. Hence, $y=zx_i$ for some $z\in Z(G)$.  This yields that $C_G(y)=C_G(x_i)$, as claimed.  

Therefore, in the notation of Theorem \ref{th-AC},  we have $n=p^2m$, $t=p+1$ and $m_i=(p-1)m$ for $i=1, 2, \ldots, p+1$. Now a direct computation shows that 
$$\kappa(G)=p^{n+m-p-3}m^{n-2},$$ as required.
\end{proof}

\begin{corollary}\label{2-abelian-trees}  Let $G$ be a group having a $2$-abelian 
partition and let $Z(G)$ be its center of order $m$. Then $$\kappa(G)=2^{5m-5}m^{4m-2}.$$ \end{corollary}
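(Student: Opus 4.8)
The plan is to recognize this corollary as the specialization $p=2$ of Theorem \ref{pp-abelian}, so that essentially all of the computation has already been carried out there. The first step is to verify that the hypotheses of Theorem \ref{pp-abelian} are met, namely that $G/Z(G)\cong {\Bbb Z}_2\times {\Bbb Z}_2$. This is supplied immediately by the classification in Theorem \ref{th-1}: whenever a nonabelian group $G$ admits a $2$-abelian partition, one has $G=P\times Q$ with $P\in {\rm Syl}_2(G)$, $P/Z(P)\cong {\Bbb Z}_2\times {\Bbb Z}_2$, and $Q$ abelian. Thus $G$ falls under Theorem \ref{pp-abelian} with the prime $p=2$.

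The second step is purely arithmetic substitution. With $p=2$, the order relation $n=p^2m$ obtained in the proof of Theorem \ref{pp-abelian} becomes $n=4m$. Plugging $p=2$ and $n=4m$ into the formula $\kappa(G)=p^{n+m-p-3}m^{n-2}$ gives an exponent of $2$ equal to $n+m-p-3=4m+m-2-3=5m-5$ and an exponent of $m$ equal to $n-2=4m-2$. Therefore
$$\kappa(G)=2^{5m-5}m^{4m-2},$$
which is precisely the claimed identity.

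Since this is a direct corollary, there is essentially no obstacle beyond correctly aligning the hypotheses; the only point demanding a moment's care is confirming that the structural conclusion of Theorem \ref{th-1}---stated there in terms of the Sylow $2$-subgroup $P$---really does force $G/Z(G)\cong {\Bbb Z}_2\times {\Bbb Z}_2$, and not merely $P/Z(P)\cong {\Bbb Z}_2\times {\Bbb Z}_2$. Because $G=P\times Q$ with $Q$ abelian, one has $Q\leqslant Z(G)$ and hence $Z(G)=Z(P)\times Q$, which yields $G/Z(G)\cong P/Z(P)\cong {\Bbb Z}_2\times {\Bbb Z}_2$. This completes the verification and places us squarely in Theorem \ref{pp-abelian} with $p=2$, from which the formula follows at once.
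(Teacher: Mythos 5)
Your proposal is correct and follows essentially the same route as the paper: invoke Theorem \ref{th-1} to get $G/Z(G)\cong {\Bbb Z}_2\times {\Bbb Z}_2$, then specialize Theorem \ref{pp-abelian} to $p=2$ and $n=4m$. The only difference is that you explicitly justify the passage from $P/Z(P)\cong {\Bbb Z}_2\times {\Bbb Z}_2$ to $G/Z(G)\cong {\Bbb Z}_2\times {\Bbb Z}_2$ via $Z(G)=Z(P)\times Q$, a detail the paper leaves implicit.
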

\begin{proof}  Using Theorem \ref{th-1},  $G/Z(G)\cong \mathbb{Z}_2\times \mathbb{Z}_2$. Now,  it follows by Theorem \ref{pp-abelian} that
$\kappa(G)=2^{5m-5}m^{4m-2}$, as required. \end{proof}
\begin{corollary}\label{3-abelian-trees}  Let $G$ be a group having a $3$-abelian 
partition and let $Z(G)$ be its center of order $m$.  The following conditions hold: 
\begin{itemize}
\item[\rm (a)] $G/Z(G)\cong {\Bbb Z}_2\times {\Bbb Z}_2$ and $\kappa(G)=2^{5m-5}m^{4m-2}$.
\item[\rm (b)] $G/Z(G)\cong {\Bbb Z}_3\times {\Bbb Z}_3$ and $\kappa(G)=3^{10m-6}m^{9m-2}$.
\item[\rm (c)] $G/Z(G)\cong {\Bbb S}_3$ and $\kappa(G)=2^{4m-4}3^{3m-2}m^{6m-1}$.
\end{itemize}
\end{corollary}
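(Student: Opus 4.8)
The plan is to deduce all three formulas from the classification already in hand. By Theorem~\ref{th-2}, any nonabelian group $G$ with a $3$-abelian partition satisfies $|Z(G)|\geqslant 2$ and has $G/Z(G)$ isomorphic to exactly one of ${\Bbb Z}_2\times{\Bbb Z}_2$, ${\Bbb Z}_3\times{\Bbb Z}_3$, or ${\Bbb S}_3$; these three possibilities are precisely the cases (a), (b), (c). Thus the corollary reduces to computing $\kappa(G)$ under each of these three hypotheses on $G/Z(G)$, with $m=|Z(G)|\geqslant 2$ throughout.

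For cases (a) and (b) I would simply invoke Theorem~\ref{pp-abelian}. When $G/Z(G)\cong{\Bbb Z}_p\times{\Bbb Z}_p$ we have $n=|G|=p^2m$, and substituting into $\kappa(G)=p^{\,n+m-p-3}m^{\,n-2}$ gives the answer after collecting exponents: with $p=2$ and $n=4m$ one gets $2^{5m-5}m^{4m-2}$, and with $p=3$ and $n=9m$ one gets $3^{10m-6}m^{9m-2}$. These two cases are immediate once Theorem~\ref{pp-abelian} is available.

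Case (c), where $G/Z(G)\cong{\Bbb S}_3$, is the one requiring real work, since Theorem~\ref{pp-abelian} does not apply. Here I would first argue that $G$ is an $AC$-group: for a noncentral $x$ we have $Z(G)<C_G(x)<G$, and since $|G:Z(G)|=6$ the intermediate index $|C_G(x):Z(G)|$ is a proper divisor of $6$ greater than $1$, hence $2$ or $3$, so $C_G(x)=\langle Z(G),x\rangle$ is abelian. Next I would read off the distinct centralizers from the subgroup lattice of ${\Bbb S}_3$: every maximal abelian subgroup of $G$ is the full preimage of a nontrivial proper subgroup of ${\Bbb S}_3$, and ${\Bbb S}_3$ has exactly one subgroup of order $3$ (its cyclic part $A_3$) and three of order $2$. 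The preimage of $A_3$ has order $3m$, and each of the three preimages of the order-$2$ subgroups has order $2m$; the two nonidentity cosets comprising $A_3$ yield the same centralizer, namely the preimage of $A_3$. Hence in the notation of Theorem~\ref{th-AC} one has $n=6m$, $t=4$, and $(m_1,m_2,m_3,m_4)=(2m,m,m,m)$. Substituting these into $\kappa(G)=n^{m-1}m^{t-1}\prod_{i=1}^{t}(m_i+m)^{m_i-1}$ and separately collecting the powers of $2$, of $3$, and of $m$ then yields the stated formula.

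The main obstacle lies entirely within case (c): verifying the $AC$-property and, above all, enumerating the four centralizers with their correct orders, i.e. lifting the subgroup lattice of ${\Bbb S}_3$ to $G$ and checking that power-related elements collapse to a common centralizer. Cases (a) and (b) are routine substitutions. As a safeguard I would verify the final exponent bookkeeping of case (c) against a concrete instance such as the dicyclic group $Q_{12}$ (where $m=2$ and $G/Z(G)\cong{\Bbb S}_3$), whose tree-number is already recorded through the $Q_{4k}$ row of Table~1, to confirm that the powers of $2$, $3$, and $m$ have been assembled correctly.
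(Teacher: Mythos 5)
Your approach is essentially the paper's: cases (a) and (b) are the same direct substitution into Theorem~\ref{pp-abelian}, and in case (c) you arrive at exactly the same data ($n=6m$, $t=4$, centralizers of orders $3m,2m,2m,2m$, hence $m_i=2m,m,m,m$) that the paper obtains by citing Belcastro's five-centralizer theorem; your direct index argument for the AC-property and your lifting of the subgroup lattice of ${\Bbb S}_3$ is an adequate replacement for that citation.

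There is, however, one concrete problem: your closing claim in case (c) that the substitution into Theorem~\ref{th-AC} ``yields the stated formula'' is false as written. Carrying the substitution out gives
$$(6m)^{m-1}\, m^{3}\, (3m)^{2m-1}\, (2m)^{3(m-1)} = 2^{4m-4}\,3^{3m-2}\,m^{6m-2},$$
i.e.\ exponent $6m-2$ on $m$, not the $6m-1$ asserted in the corollary (and in the paper's own proof). Your proposed safeguard settles this decisively: for $Q_{12}$ one has $m=2$ and $G/Z(G)\cong {\Bbb S}_3$, and the $Q_{4k}$ row of Table~1 gives $\kappa(Q_{12})=2^{14}\cdot 3^{4}$, which matches $2^{4m-4}3^{3m-2}m^{6m-2}$ but not $2^{4m-4}3^{3m-2}m^{6m-1}=2^{15}\cdot 3^{4}$. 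So the defect is not in your method but in the printed exponent; the likely source of the paper's slip is counting $G=C_G(1)$ among the centralizers, i.e.\ using $t=5$ rather than $t=4$ in the factor $m^{t-1}$. You should either correct the target formula to $m^{6m-2}$ or explicitly flag the inconsistency, rather than asserting that your (correct) computation reproduces the stated exponent.
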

\begin{proof}  Using Theorem \ref{th-2},  we treat separately the different cases:
\begin{itemize}
\item[\rm (1)] $G/Z(G)\cong {\Bbb Z}_p\times {\Bbb Z}_p$ where $p\in \{2, 3\}$.   
In this case, it follows from Theorem \ref{pp-abelian} that 
$$\kappa(G)=p^{m(p^2+1)-p-3}m^{mp^2-2},$$
whence also (a) and (b) hold.
\item[\rm (2)] $G/Z(G)\cong {\Bbb S}_3$.  Using Theorem 5 in \cite{Belcastro}, $G$ has only five 
distinct centralizers. 
In fact, in the notation of Theorem  \ref{th-2}, the distinct centralizers of $G$, written as 
unions of right cosets of $Z=Z(G)$, are 
\begin{itemize}
\item[] $C_1=Z\cup Zx\cup Zx^2=\langle Z, x\rangle$,
\item[]  $C_2=Z\cup Zy=\langle Z, y\rangle$,
\item[] $C_3=Z\cup Zyx=\langle Z, xy\rangle$,
\item[] $C_4=Z\cup Zyx^2=\langle Z, xy^2\rangle$.
\end{itemize}
This shows that $G$ is an AC-group, and thus using Theorem \ref{th-AC}, we obtain 
$\kappa(G)=2^{4m-4}3^{3m-2}m^{6m-1},$ and (c) follows.
\end{itemize}
The proof is complete.
\end{proof}
\begin{center}
 {\bf Acknowledgments }
\end{center}
This work was done during the second author had a visiting position at the
Department of Mathematical Sciences, Kent State
University, USA. He would like to thank the hospitality of the Department of Mathematical Sciences of KSU. This
research was also supported by Iran National Science Foundation (INSF:
94028936) while the first author was a Postdoctoral Research
Associate at the Faculty of Mathematics of K. N. Toosi University
of Technology.

\noindent {\sc  A. Mahmoudifar}\\[0.2cm]
{\sc Faculty of Mathematics, K. N. Toosi
University of Technology,
 P. O. Box $16315$--$1618$, Tehran, Iran,}\\[0.1cm]
 {\em E-mail address}: {\tt alimahmoudifar@gmail.com}\\[0.3cm]
 {\sc A. R. Moghaddamfar}\\[0.2cm]
{\sc Faculty of Mathematics, K. N. Toosi
University of Technology,
 P. O. Box $16315$--$1618$, Tehran, Iran,}\\[0.1cm]
 {\sc and}\\[0.1cm]
 {\sc Department of Mathematical Sciences, Kent State
University,}\\ {\sc  Kent, Ohio $44242$, United States of
America}\\[0.1cm]
{\em E-mail addresses:}:  {\tt
moghadam@kntu.ac.ir}, and {\tt amoghadd@kent.edu}\\[0.3cm]
 {\sc  F. Salehzadeh}\\[0.2cm]
{\sc Faculty of Mathematics, K. N. Toosi
University of Technology,
 P. O. Box $16315$--$1618$, Tehran, Iran,}\\[0.1cm]
 {\em E-mail address}: {\tt salehzadeh.fayez@gmail.com}\\[0.3cm]
\end{document}